\DeclarePairedDelimiter{\abs}{\lvert}{\rvert}
\newtheorem{theorem}{Theorem}[section]
\newtheorem{corollary}[theorem]{Corollary}
\newtheorem{proposition}[theorem]{Proposition}
\newtheorem{korselt's criterion}[theorem]{Korselt's criterion}
\newtheorem{definition}[theorem]{Definition}
\newtheorem{example}[theorem]{Example}
\newtheorem{examples}[theorem]{Examples}
\numberwithin{equation}{section}
\begin{document}
\title[Korselt rational bases of  prime powers ]
{Korselt rational bases of  prime powers}
\author{Nejib Ghanmi}
\address[Ghanmi]{(1) Preparatory Institut of Engineering Studies of Tunis, Montfleury, Tunisia.}

 \address[]{\hspace{1.65cm}(2) University College of Jammum, Department of Mathematics, Mekkah, Saudi Arabia.}
\email{neghanmi@yahoo.fr\; and \; naghanmi@uqu.edu.sa}

\thanks{}

\subjclass[2010]{Primary $11Y16$; Secondary $11Y11$, $11A51$.}

\keywords{Prime number, Carmichael number, Square free composite number, Korselt base, Korselt number, Korselt set}

\begin{abstract}

Let $N$ be a positive integer,  $\mathbb{A}$ be a subset of $\mathbb{Q}$ and $\alpha=\dfrac{\alpha_{1}}{\alpha_{2}}\in \mathbb{A}\setminus \{0,N\}$. $N$ is called an \emph{$\alpha$-Korselt number} (equivalently $\alpha$ is said an \emph{$N$-Korselt base})
 if  $\alpha_{2}p-\alpha_{1}$ divides $\alpha_{2}N-\alpha_{1}$ for every prime divisor $p$ of $N$. By the \emph{Korselt set} of  $N$ over $\mathbb{A}$, we
mean the set $\mathbb{A}$-$\mathcal{KS}(N)$ of all $\alpha\in \mathbb{A}\setminus \{0,N\}$ such that $N$ is an $\alpha$-Korselt number.

In this paper we determine explicitly for a given prime number $q$  and an integer $l\in \mathbb{N}\setminus \{0,1\}$, the set $\mathbb{Q}$-$\mathcal{KS}(q^{l})$ and we establish some connections between the $q^{l}$-Korselt bases  in $\mathbb{Q}$ and others in $\mathbb{Z}$. The case of
$\mathbb{A}=\mathbb{Q}\cap[-1,1[$ is studied where we prove that $(\mathbb{Q}\cap[-1,1[)$-$\mathcal{KS}(q^{l})$ is empty if and only if $l=2$.

Moreover,  we show that  each nonzero rational $\alpha$ is an $N$-Korselt base for infinitely many numbers $N=q^{l}$ where $q$ is a prime number and $l\in\mathbb{N}$.

\end{abstract}

\maketitle

\section{Introduction}

By extending the Korselt criterion, Bouallegue-Echi-Pinch~\cite{BouEchPin,echi} stated the notion of Korselt numbers and  gave rise to a new kind of generalized Carmichael numbers. As known, Carmichael numbers ~\cite{Bee,Car2} are squarefree composite  numbers $N$ verifying $a^{N-1}\equiv 1 \pmod  N$  for each integer $a$ with
$\gcd(a,N) = 1$ . These numbers are characterized by Korselt as follows:

\begin{quotation}
\textbf{Korselt's criterion (1899)~\cite{Kor}}: A composite integer  $N>1$ is carmichael if and only if $N$ is squarefree and  $p-1$ divides $N-1$ for all prime factors  $p$ of $N$.
\end{quotation}
\medskip

In 1910, Carmichael ~\cite{Car1} found  $561=3.11.17$ as  the first and smallest  number  verifying the Korselt criterion, this explains the name "Carmichael numbers".
Although, Korselt was the first who state the basic properties of Carmichael numbers, he could not find any example; if he had just done a few computation, then surely
``Carmichael numbers" would have been known as ``Korselt numbers".

In honor to Korselt, Bouallegue-Echi-Pinch ~\cite{BouEchPin,echi} introduced the concept of $\alpha$-Korselt numbers with $\alpha\in\mathbb{Z}$, where the Carmichael numbers are exactly the squarefree composite  1-Korselt numbres.

Ghanmi ~\cite{Ghanmi2} introduced the  notion of $\mathbb{Q}$-Korselt numbers as extension  of Korselt numbers to $\mathbb{Q}$ by setting the following definitions:

\begin{definition}\label{def1}\rm Let $N\in \mathbb{N}\setminus\{0,1\}$,   $\alpha=\dfrac{\alpha_{1}}{\alpha_{2}}\in \mathbb{Q}\setminus \{0\}$ with $\mathbb{A}$ a subset of $\mathbb{Q}$.
Then
\begin{enumerate}
  \item $N$ is said to be an \emph{$\alpha$-Korselt number\index{Korselt number}} (\emph{$K_{\alpha}$-number}, for
short), if $N\neq \alpha$ and $\alpha_{2}p-\alpha_{1}$ divides $\alpha_{2}N-\alpha_{1}$ for
every prime divisor $p$ of $N$.

\item By the \emph{$\mathbb{A}$-Korselt set}\index{Korselt set} of the number $N$ (or the Korselt set of $N$ over \emph{$\mathbb{A}$}) , we mean the set $\mathbb{A}$-$\mathcal{KS}(N)$ of
all $\beta\in \mathbb{A}\setminus\{0,N\}$ such that $N$ is a $K_{\beta}$-number.
  \item The cardinality of $\mathbb{A}$-$\mathcal{KS}(N)$ will be called the \emph{$\mathbb{A}$-Korselt
weight}\index{Korselt weight} of $N$ (or the Korselt weight of $N$ over \emph{$\mathbb{A}$}) ; we denote it by $\mathbb{A}$-$\mathcal{KW}(N)$.

\end{enumerate}

\end{definition}

In this paper we will concentrate on the study of the rational numbers $\alpha\in \mathbb{Q}\setminus \{0\}$ for which a given number $N$ is an $\alpha$-Korselt number, for this we state the following definition.
\begin{definition}\label{def2}\rm Let $N\in \mathbb{N}\setminus\{0,1\}$, $\alpha\in \mathbb{Q}\setminus\{0\}$ and $\mathbb{B}$ be a subset of $\mathbb{N}$. Then

\begin{enumerate}
\item  $\alpha$ is called \emph{$N$-Korselt base\index{Korselt base}} (\emph{$K_{N}$-base}, for
short) if $N$ is a \emph{$K_{\alpha}$-number}.
\item By the \emph{$\mathbb{B}$-Korselt set}\index{Korselt base set} of the base $\alpha$ (or the Korselt set of the base $\alpha$ over \emph{$\mathbb{B}$}), we mean the set $\mathbb{B}$-$\mathcal{KS}(B(\alpha))$ of
all $M\in \mathbb{B}$ such that $\alpha$ is a
$K_{M}$-base.
\item The cardinality of  $\mathbb{B}$-$\mathcal{KS}(B(\alpha))$ will be called the \emph{$\mathbb{B}$-Korselt
weight}\index{Korselt  weight} of the base $\alpha$ (or the Korselt weight of the base $\alpha$ over \emph{$\mathbb{B}$}); we denote it by $\mathbb{B}$-$\mathcal{KW}(B(\alpha))$.
\end{enumerate}
\end{definition}

For more convenience, the set $\bigcup\limits_{\alpha \in\mathbb{Q}} ( \mathbb{N} \text{-}\mathcal{KS}(B(\alpha)))$ is  simply called the set of $\mathbb{Q}$-Korselt numbers or the set of rational Korselt numbers or the set of  Korselt numbers over $\mathbb{Q}$. Also, the set $\bigcup\limits_{N \in\mathbb{N}} ( \mathbb{Q} \text{-}\mathcal{KS}(N))$ is called the set of Korselt rational bases or the set of $\mathbb{N}$-Korselt bases in $\mathbb{Q}$ or  the set of  Korselt rational bases  over $\mathbb{N}$.

Since their creation, Korselt numbers are well investigated specially in~\cite{rassasi,Ghanmi,BouEchPin,Ghanmi2,echi}, where the majority of cases except in~\cite{rassasi}, the authors restrict their studies on the case of squarefree composite numbers. In this paper we focus our study on the prime power Korselt numbers, where we manage to determine explicitly the Korselt set and the Korselt weight  of a prime power number over  $\mathbb{Q}$. Therefore many properties of prime power Korselt number over  $\mathbb{Z}$ immediately follows.

We organize this work as follows. In section $1$ we recall some useful properties of the $q^{l}$-Korselt rational bases. The  $q^{l}$-Korselt set over the sets $ \mathbb{Q}$ and $\mathbb{Z}$ are determined  explicitly in section $2$. Hence, we simply deduce each of the weights $\mathbb{Q}$-$\mathcal{KW}(q^{l})$ and $\mathbb{Z}$-$\mathcal{KW}(q^{l})$. Moreover, we establish some relations between the  $q^{l}$-Korselt bases in $\mathbb{Z}$  and others in $\mathbb{Q}$. In section  $3$, we prove that  each nonzero rational $\alpha$ is a $K_{N}$-base for infinitely many prime powers $N$. Finally, the section $4$ is devoted to study the $q^{l}$-Korselt rational bases in $[-1,1[$, where we present a necessary and sufficient condition for a given $\alpha\in\mathbb{Q}\cap[-1,1[$ to be a $q^{l}$-Korselt rational base. This leads to conclude that for each prime number $q$, the
$q^{2}$-Korselt set over $\mathbb{Q}\cap[-1,1[$ is empty.

Throughout this paper we assume that $l\geq 2$ and for $\alpha=\dfrac{\alpha_{1}}{\alpha_{2}}\in\mathbb{Q}$, we will suppose without loss of generality that $\alpha_{2}>0$, $\alpha_{1}\in \mathbb{Z}$ and $\gcd(\alpha_{1},\alpha_{2})=1$.

\section{Properties of  $q^{l}$-Korselt rational bases}

\begin{proposition}\label{carac0} Let $q$ be a prime number, $\alpha=\dfrac{\alpha_{1}}{\alpha_{2}}\in\mathbb{Q}\setminus\{0\}$ and

$l\in\mathbb{N}\setminus\{0\}$.

\begin{enumerate}
  \item If $\gcd(\alpha_{1},q)=1$, then
  $$\alpha\in\mathbb{Q}\text{-}\mathcal{KS}(q^{l}) \ \ \text{if and only if} \ \ (\alpha_{2}q-\alpha_{1} )\mid(q^{l-1}-1).$$
  \item If $\alpha_{1}=\alpha_{1}^{'}q$, then
  $$\alpha\in\mathbb{Q}\text{-}\mathcal{KS}(q^{l})\ \ \text{if and only if} \ \ (\alpha_{2}-\alpha_{1}^{'})\mid( q^{l-1}-1).$$
\end{enumerate}

\end{proposition}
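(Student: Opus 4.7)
The plan is to reduce the Korselt condition for $N=q^{l}$ to a divisibility involving only $q^{l-1}-1$ by a single algebraic identity, then split into the two coprimality cases.

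Since $q$ is the only prime divisor of $N = q^{l}$, by Definition~\ref{def1} we have $\alpha \in \mathbb{Q}\text{-}\mathcal{KS}(q^{l})$ if and only if $(\alpha_{2}q-\alpha_{1}) \mid (\alpha_{2}q^{l}-\alpha_{1})$. First I would write the identity
\begin{equation*}
\alpha_{2}q^{l}-\alpha_{1} \;=\; q^{l-1}(\alpha_{2}q-\alpha_{1}) + \alpha_{1}(q^{l-1}-1),
\end{equation*}
from which the Korselt condition is equivalent to
\begin{equation*}
(\alpha_{2}q-\alpha_{1}) \mid \alpha_{1}(q^{l-1}-1).
\end{equation*}

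For part (1), under the assumption $\gcd(\alpha_{1},q)=1$, combined with the standing assumption $\gcd(\alpha_{1},\alpha_{2})=1$, I would observe $\gcd(\alpha_{2}q-\alpha_{1},\alpha_{1}) = \gcd(\alpha_{2}q,\alpha_{1}) = 1$. Therefore the divisibility above reduces exactly to $(\alpha_{2}q-\alpha_{1}) \mid (q^{l-1}-1)$, which is the desired equivalence.

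For part (2), writing $\alpha_{1}=\alpha_{1}'q$ and noting that $\gcd(\alpha_{1},\alpha_{2})=1$ forces both $\gcd(q,\alpha_{2})=1$ and $\gcd(\alpha_{1}',\alpha_{2})=1$, I would factor
\begin{equation*}
\alpha_{2}q-\alpha_{1} \;=\; q(\alpha_{2}-\alpha_{1}'), \qquad \alpha_{1}(q^{l-1}-1) \;=\; \alpha_{1}'q(q^{l-1}-1),
\end{equation*}
cancel the common factor $q$, and then use $\gcd(\alpha_{2}-\alpha_{1}',\alpha_{1}') = \gcd(\alpha_{2},\alpha_{1}') = 1$ to reduce $(\alpha_{2}-\alpha_{1}') \mid \alpha_{1}'(q^{l-1}-1)$ to $(\alpha_{2}-\alpha_{1}') \mid (q^{l-1}-1)$, completing the proof.

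There is no real obstacle here; the proposition is essentially a direct computation. The only point requiring care is the coprimality arguments that allow one to strip $\alpha_{1}$ (respectively $\alpha_{1}'$) from the right-hand side of the divisibility, which is why the hypothesis $\gcd(\alpha_{1},\alpha_{2})=1$ fixed at the end of the introduction plays a crucial role in both cases.
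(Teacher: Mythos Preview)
Your proof is correct and follows essentially the same approach as the paper: reduce the Korselt condition via an algebraic identity to a divisibility involving $q^{l-1}-1$, then strip the extraneous factor using coprimality. The only cosmetic difference is that the paper uses the identity $\alpha_{2}q^{l}-\alpha_{1}=\alpha_{2}q(q^{l-1}-1)+(\alpha_{2}q-\alpha_{1})$, arriving at $(\alpha_{2}q-\alpha_{1})\mid q(q^{l-1}-1)$ and then stripping the factor $q$ in case~(1), whereas you use $\alpha_{2}q^{l}-\alpha_{1}=q^{l-1}(\alpha_{2}q-\alpha_{1})+\alpha_{1}(q^{l-1}-1)$ and strip $\alpha_{1}$ instead; the arguments are otherwise identical in structure and difficulty.
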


\begin{proof}
We have $\alpha\in\mathbb{Q}$-$\mathcal{KS}(q^{l})$ if and only if $ (\alpha_{2}q-\alpha_{1})\mid(\alpha_{2}q^{l}-\alpha_{1})=\alpha_{2}q(q^{l-1}-1)+\alpha_{2}q-\alpha_{1}.$
As $\gcd(\alpha_{2}q-\alpha_{1},\alpha_{2})=1$, it follows that

\begin{equation}\label{eq1}
\alpha\in\mathbb{Q}\text{-}\mathcal{KS}(q^{l}) \Leftrightarrow \alpha_{2}q-\alpha_{1}\mid q(q^{l-1}-1)
  \end{equation}

Hence, if $\gcd(\alpha_{1},q)=1$ then  $\eqref{eq1}$ gives

$$\alpha\in\mathbb{Q}\text{-}\mathcal{KS}(q^{l}) \Leftrightarrow (\alpha_{2}q-\alpha_{1})\mid (q^{l-1}-1),$$

and if $\alpha_{1}=\alpha_{1}^{'}q$, then by $\eqref{eq1}$ we get

$$\alpha\in\mathbb{Q}\text{-}\mathcal{KS}(q^{l}) \Leftrightarrow (\alpha_{2}-\alpha_{1}^{'})\mid (q^{l-1}-1).$$

\end{proof}
The following result is an immediate consequence of Definition ~\ref{def1}.
\begin{corollary}\label{firstprop}

Let $q$ be a prime number,  $l\in\mathbb{N}\setminus\{0\}$ and $\dfrac{\alpha_{1}^{'}q}{\alpha_{2}}\in \mathbb{Q}\setminus\{0\}$ with $\gcd(\alpha_{1}^{'}q,\alpha_{2})=1$. Then,
$\dfrac{\alpha_{1}^{'}q}{\alpha_{2}}\in\mathbb{Q}$-$\mathcal{KS}(q^{l})$ if and only if \, $\dfrac{\alpha_{2}q}{\alpha_{1}^{'}}\in\mathbb{Q}$-$\mathcal{KS}(q^{l})$.
\end{corollary}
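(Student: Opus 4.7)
The plan is to apply Proposition~\ref{carac0} to both rational bases and verify that the resulting divisibility conditions coincide. The hypothesis $\gcd(\alpha_1' q,\alpha_2)=1$ automatically gives $\gcd(\alpha_1',\alpha_2)=1$ and $\gcd(q,\alpha_2)=1$, so $\alpha=\alpha_1' q/\alpha_2$ is already in lowest terms with numerator divisible by $q$. Part~(2) of Proposition~\ref{carac0} then delivers
\[
\alpha\in\mathbb{Q}\text{-}\mathcal{KS}(q^l) \iff (\alpha_2-\alpha_1')\mid(q^{l-1}-1).
\]

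The analogous step for $\beta=\alpha_2 q/\alpha_1'$ requires a short case distinction to put $\beta$ in lowest terms before invoking the proposition. If $q\nmid\alpha_1'$ then $\beta$ is already reduced with numerator divisible by $q$, and Proposition~\ref{carac0}(2) yields the condition $(\alpha_1'-\alpha_2)\mid(q^{l-1}-1)$. If instead $\alpha_1'=q\alpha_1''$, then $\beta=\alpha_2/\alpha_1''$ is reduced with numerator coprime to $q$ (because $\gcd(q,\alpha_2)=1$), and Proposition~\ref{carac0}(1) yields $(\alpha_1'' q-\alpha_2)\mid(q^{l-1}-1)$, i.e.\ $(\alpha_1'-\alpha_2)\mid(q^{l-1}-1)$. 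In both sub-cases the condition for $\beta$ is $-(\alpha_2-\alpha_1')\mid(q^{l-1}-1)$, which is identical to the condition for $\alpha$, establishing the equivalence.

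The only mild obstacle is the bookkeeping in the case split, driven by whether $\beta$ is already in lowest terms. A slicker alternative that avoids the split is to work directly from Definition~\ref{def1}: the Korselt divisibility $(\alpha_2 q-\alpha_1)\mid(\alpha_2 q^l-\alpha_1)$ is visibly invariant under simultaneous rescaling of $(\alpha_1,\alpha_2)$, so it may be applied to the formal representation $(\alpha_2 q,\alpha_1')$ of $\beta$ without first reducing. After cancelling the common factor of $q$ and telescoping as in the proof of Proposition~\ref{carac0}, the condition for $\beta$ collapses immediately to $(\alpha_2-\alpha_1')\mid(q^{l-1}-1)$, making the symmetry between $\alpha$ and $\beta$ fully transparent. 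A final routine check that $q^l\neq\alpha$ and $q^l\neq\beta$ in the degenerate situations $\alpha_2=1$ or $\alpha_1'=1$ completes the argument.
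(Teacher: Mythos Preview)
Your argument is correct. The paper provides no explicit proof at all: it simply asserts that the corollary ``is an immediate consequence of Definition~\ref{def1}''. Your primary route goes through Proposition~\ref{carac0} with a case split on whether $q\mid\alpha_1'$; this is valid but slightly more elaborate than necessary. Your \emph{alternative} paragraph---working directly from the definition, noting that the divisibility condition is insensitive to rescaling the representation, and observing that after cancelling the common factor $q$ the conditions for $\alpha$ and $\beta$ both collapse to $(\alpha_2-\alpha_1')\mid(q^{l-1}-1)$---is precisely the one-line argument the paper has in mind and is the cleaner of your two approaches. The degenerate checks you flag ($\alpha\neq q^l$, $\beta\neq q^l$) are indeed routine: since $\gcd(q,\alpha_2)=1$, neither equality can occur for $l\geq2$, and for $l=1$ both reduce to $\alpha_1'\neq\alpha_2$.
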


By the next two results, we give information about  $\mathbb{Q}$-$\mathcal{KS}(q^{l})$.
\begin{proposition}\label{bound0}Let $\alpha\in\mathbb{Q}$-$\mathcal{KS}(q^{l})$. If  $\alpha=\alpha^{'}q$, then   $$2-q^{l-1}\leq\alpha^{'}\leq \dfrac{q^{l-1}+1}{2}.$$

\end{proposition}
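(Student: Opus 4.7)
The plan is to invoke Proposition~\ref{carac0}(2), which translates $\alpha\in\mathbb{Q}$-$\mathcal{KS}(q^{l})$ into the divisibility $(\alpha_{2}-\alpha_{1}^{'})\mid(q^{l-1}-1)$, where $\alpha_{1}=\alpha_{1}^{'}q$; since $\gcd(\alpha_{1},\alpha_{2})=1$ forces $\gcd(\alpha_{1}^{'},\alpha_{2})=\gcd(q,\alpha_{2})=1$, the rational $\alpha^{'}=\alpha_{1}^{'}/\alpha_{2}$ is exactly $\alpha/q$ in lowest terms. Set $d:=\alpha_{2}-\alpha_{1}^{'}$; since $l\geq 2$ gives $q^{l-1}-1\geq 1$, we obtain a \emph{nonzero} integer $d$ with $|d|\leq q^{l-1}-1$. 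Writing $\alpha^{'}=1-d/\alpha_{2}$, both inequalities then reduce to controlling the range of $d$.

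For the lower bound, I use $\alpha_{2}\geq 1$, so $d/\alpha_{2}\leq|d|\leq q^{l-1}-1$, giving $\alpha^{'}=1-d/\alpha_{2}\geq 2-q^{l-1}$. For the upper bound, the case $d\geq 0$ is immediate since then $\alpha^{'}\leq 1\leq(q^{l-1}+1)/2$, so I assume $d<0$ and set $e:=-d>0$, a positive divisor of $q^{l-1}-1$ for which $\alpha^{'}=1+e/\alpha_{2}$. If $e<q^{l-1}-1$, then $e$ is a proper divisor of the positive integer $q^{l-1}-1$, hence $e\leq(q^{l-1}-1)/2$; combined with $\alpha_{2}\geq 1$ this gives $\alpha^{'}\leq 1+(q^{l-1}-1)/2=(q^{l-1}+1)/2$. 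If $e=q^{l-1}-1$, then $\alpha_{1}^{'}=\alpha_{2}+q^{l-1}-1$; here I rule out $\alpha_{2}=1$, because it would force $\alpha_{1}^{'}=q^{l-1}$ and thus $\alpha=\alpha_{1}^{'}q/\alpha_{2}=q^{l}=N$, contradicting the definition of $\mathbb{Q}$-$\mathcal{KS}(q^{l})$. Hence $\alpha_{2}\geq 2$, so $\alpha^{'}=1+(q^{l-1}-1)/\alpha_{2}\leq(q^{l-1}+1)/2$ once more.

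The only delicate point, and the main obstacle, is this last subcase: the sharper bound $(q^{l-1}+1)/2$ rather than the naive $q^{l-1}$ (which would come directly from $|d|\leq q^{l-1}-1$) arises precisely because excluding $\alpha=N$ kills the single extremal configuration $(\alpha_{2},\alpha_{1}^{'})=(1,q^{l-1})$. Once that pair is removed, either $\alpha_{2}$ is forced to be at least $2$, or $|d|$ drops to at most $(q^{l-1}-1)/2$; either way the bound effectively halves.
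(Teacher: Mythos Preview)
Your proof is correct and takes a somewhat different, cleaner route than the paper. The paper argues directly from the Korselt divisibility $\alpha_{2}q-\alpha_{1}\mid \alpha_{2}q^{l}-\alpha_{1}$ and splits according to the sign of $\alpha'$: for $\alpha'>0$ it first rules out $\alpha'\geq q^{l-1}$ (which would force $\alpha=N$) and then, writing $q^{l-1}-\alpha'=k(1-\alpha')$, uses $|1-\alpha'|\leq|q^{l-1}-\alpha'|$ to get the upper bound; for $\alpha'<0$ it observes that the integer $k$ in $N-\alpha=k(q-\alpha)$ must be at least $2$, yielding the lower bound. You instead reduce immediately via Proposition~\ref{carac0}(2) to $d:=\alpha_{2}-\alpha_{1}'$ dividing $q^{l-1}-1$, and then exploit the elementary fact that a \emph{proper} positive divisor of $m$ is at most $m/2$. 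Your approach is more streamlined and makes transparent why the constant $\tfrac{1}{2}$ appears: it is the proper-divisor halving, together with the exclusion of the single extremal pair $(\alpha_{2},\alpha_{1}')=(1,q^{l-1})$ corresponding to $\alpha=N$. The paper's argument, by contrast, is self-contained from the definition and does not appeal to Proposition~\ref{carac0}, at the cost of a slightly longer case analysis.
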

\begin{proof}
Let $N=q^{l}$ and $\alpha=\dfrac{\alpha_{1}}{\alpha_{2}}=\alpha^{'}q \in\mathbb{Q}$-$\mathcal{KS}(q^{l})$ with $\gcd(\alpha_{2},\alpha_{1})=1.$

 \begin{description}
   \item[Case 1] Assume that $\alpha^{'} >0$. First, we show by contradiction that $\alpha^{'} < q^{l-1}$. If $\alpha^{'} \geq q^{l-1}$, then $\alpha_{1}-\alpha_{2}N=\alpha_{2}q(\alpha^{'} -q^{l-1})\geq0$,  hence $\alpha_{1}-\alpha_{2}q>\alpha_{1}-\alpha_{2}N\geq0.$ But as   $\alpha_{1}-\alpha_{2}q$ divides $\alpha_{1}-\alpha_{2}N$, it follows that  $\alpha_{1}-\alpha_{2}N=0$, and so $\alpha =N$ which is impossible. Thus, $\alpha^{'} < q^{l-1}$.

Now, let us show that $\alpha^{'}\leq\dfrac{q^{l-1}+1}{2}.$  We can suppose that $\alpha^{'}>1$ (otherwise  $\alpha^{'} \leq 1$ and so the result is immediate).

Since $\alpha_{2}N-\alpha_{1} =k(\alpha_{2}q-\alpha_{1})\ \ \text{with} \ \ k \in \mathbb{Z}\setminus \{0\}$, it follows by dividing both sides by $\alpha_{2}q$, that $q^{l-1}-\alpha^{'} =k(1-\alpha^{'})$. As $\alpha^{'} > 1$, it yields that $\alpha^{'}-1=|1-\alpha^{'}|\leq |q^{l-1}-\alpha^{'}|=q^{l-1}-\alpha^{'}$, and so $\alpha^{'}\leq\dfrac{q^{l-1}+1}{2}$.
   \item[Case 2] Now, suppose that $\alpha^{'} <0 $ (i.e.  $\alpha_1 <0 $).

   Since $\alpha_{2}N-\alpha_{1} =k(\alpha_{2}q-\alpha_{1})$ with $k \in
\mathbb{N}$, we  claim that $k \geq 2$, otherwise $l = 1$ which is impossible. So, $N-\alpha=k(q-\alpha)\geq 2(q-\alpha)$, which implies that  $ 2q-N\leq\alpha$. Thus, $2-q^{l-1}\leq\alpha^{'}$.
 \end{description}

\end{proof}

\begin{proposition}\label{bound1}  Assume that $\alpha=\dfrac{\alpha_{1}}{\alpha_{2}}\in\mathbb{Q}$-$\mathcal{KS}(q^{l})$ with  $\gcd(\alpha_{1},q)=1$. Then    $$1+q-q^{l-1}\leq\alpha\leq q^{l-1}+q-1.$$
\end{proposition}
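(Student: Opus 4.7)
The plan is to apply Proposition~\ref{carac0}(1) and then extract the bounds directly from the resulting divisibility condition. Because $\gcd(\alpha_{1},q)=1$, that proposition gives
$$\alpha\in\mathbb{Q}\text{-}\mathcal{KS}(q^{l})\iff (\alpha_{2}q-\alpha_{1})\mid (q^{l-1}-1),$$
so my starting point is the single divisibility relation $(\alpha_{2}q-\alpha_{1})\mid (q^{l-1}-1)$.

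First I would check that $\alpha_{2}q-\alpha_{1}\neq 0$. Otherwise $\alpha_{2}q=\alpha_{1}$, and the standing convention $\gcd(\alpha_{1},\alpha_{2})=1$ forces $\alpha_{2}=1$ and $\alpha_{1}=q$, which contradicts $\gcd(\alpha_{1},q)=1$. Since $l\geq 2$, the right-hand side $q^{l-1}-1$ is a positive integer, so any nonzero integer dividing it is bounded in absolute value by it, yielding $|\alpha_{2}q-\alpha_{1}|\leq q^{l-1}-1$.

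Next I would divide this inequality by the positive integer $\alpha_{2}\geq 1$ to get
$$|q-\alpha|\;\leq\;\frac{q^{l-1}-1}{\alpha_{2}}\;\leq\;q^{l-1}-1,$$
which rearranges to $1+q-q^{l-1}\leq\alpha\leq q^{l-1}+q-1$, exactly the claimed estimate.

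There is no substantive obstacle: the result is essentially a direct corollary of Proposition~\ref{carac0}(1). The only point requiring a moment's attention is to bound $|\alpha_{2}q-\alpha_{1}|$ first and then divide through by $\alpha_{2}$, rather than trying to control $\alpha_{1}$ and $\alpha_{2}$ separately; this also explains why the argument is slightly shorter than the proof of Proposition~\ref{bound0}, where the two sign cases for $\alpha^{\prime}$ had to be treated independently.
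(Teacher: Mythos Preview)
Your proof is correct and follows essentially the same route as the paper: invoke Proposition~\ref{carac0}(1) to obtain $(\alpha_{2}q-\alpha_{1})\mid(q^{l-1}-1)$, bound $|\alpha_{2}q-\alpha_{1}|$ by $q^{l-1}-1$, and divide through by $\alpha_{2}\ge 1$. Your explicit verification that $\alpha_{2}q-\alpha_{1}\neq 0$ is a small extra bit of care that the paper leaves implicit.
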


\begin{proof}

Let $N=q^{l}$ and $\alpha=\dfrac{\alpha_{1}}{\alpha_{2}}\in\mathbb{Q}$-$\mathcal{KS}(N)$. Then, by Proposition ~\ref{carac0}$(1)$ we have
    $\alpha_{2}q-\alpha_{1}\mid q^{l-1}-1$,
hence   $1-q^{l-1}\leq \alpha_{2}q-\alpha_{1}\leq  q^{l-1}-1$. Since $\alpha_{2}\geq1$,  this implies that
$$1-q^{l-1}\leq\dfrac{1-q^{l-1}}{\alpha_{2}}\leq q-\alpha\leq  \dfrac{q^{l-1}-1}{\alpha_{2}}\leq q^{l-1}-1.$$

Thus,  $$1+q-q^{l-1}\leq\alpha\leq q^{l-1}+q-1.$$

\end{proof}

The  following two examples  show that in Propositions ~\ref{bound0} and ~\ref{bound1} the upper and lower bounds are attained (except in Proposition ~\ref{bound0} where   only the upper bound is attained for $N=2^{2}$).
\begin{examples}\label{Attein1}\rm
\begin{enumerate}
  \item    Let $\alpha=\dfrac{q^{l}+q}{2}$. Then $q^{l}-\alpha=\dfrac{q^{l}-q}{2}=-(q-\alpha)$, therefore $\alpha$ is a $q^{l}$-Korselt base.

  Now, by taking $\beta=2q-q^{l}$, we get  $q^{l}-\beta=2(q^{l}-q)=2(q-\beta)$. Thus $\beta$ is a $q^{l}$-Korselt base.

  \item  Let $\alpha=q^{l-1}+q-1$. Then $q^{l}-\alpha=(1-q)(1-q^{l-1})=(1-q)(q-\alpha)$, hence $\alpha$ is a $q^{l}$-Korselt base.

   If  $\beta=1+q-q^{l-1}$, then $q^{l}-\beta=(q+1)(q^{l-1}-1)=(q+1)(q-\beta)$. Hence $\beta$ is a $q^{l}$-Korselt base.

\end{enumerate}

\end{examples}

\bigskip

The following result gives relations between the Korselt sets of two powers of the same prime number as an immediate consequence of Definition~\ref{def1}.

\begin{proposition}\label{inclusion1}Let  $l,k\in\mathbb{N}\setminus\{0,1\}$, $q$ be a prime number and $\mathbb{A}$ be a subset of $\mathbb{Q}$.  Then
\begin{enumerate}
  \item  $\mathbb{A}\text{-}\mathcal{KS}(q^{2})\subseteq  \mathbb{A}\text{-}\mathcal{KS}(q^{l})$.
  \item In general , if $(k-1)\mid(l-1)$ then $\mathbb{A}\text{-}\mathcal{KS}(q^{k})\subseteq  \mathbb{A}\text{-}\mathcal{KS}(q^{l})$.

 \end{enumerate}

 \end{proposition}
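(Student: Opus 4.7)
The plan is to reduce (1) to (2), since $(2-1)=1$ divides every positive integer, so any $\alpha\in\mathbb{A}\text{-}\mathcal{KS}(q^{2})$ automatically satisfies the hypothesis of (2) with $k=2$. For (2) I would exploit Proposition~\ref{carac0}, which converts membership in $\mathbb{Q}\text{-}\mathcal{KS}(q^{m})$ into a divisibility condition by $q^{m-1}-1$.

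The core step: take $\alpha=\alpha_{1}/\alpha_{2}\in\mathbb{A}\text{-}\mathcal{KS}(q^{k})$ in reduced form and split according to the two cases of Proposition~\ref{carac0}. In the case $\gcd(\alpha_{1},q)=1$ one gets $(\alpha_{2}q-\alpha_{1})\mid(q^{k-1}-1)$; in the case $\alpha_{1}=\alpha_{1}'q$ one gets $(\alpha_{2}-\alpha_{1}')\mid(q^{k-1}-1)$. The key elementary fact I would invoke is that $(k-1)\mid(l-1)$ forces $(q^{k-1}-1)\mid(q^{l-1}-1)$, via the standard identity $y^{m}-1=(y-1)(y^{m-1}+\cdots+1)$ applied with $y=q^{k-1}$ and $m=(l-1)/(k-1)$. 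Transitively composing this with the previous divisibility and re-applying Proposition~\ref{carac0} in the same case then delivers $\alpha\in\mathbb{A}\text{-}\mathcal{KS}(q^{l})$.

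The only subtlety, which I would handle separately, is that Definition~\ref{def1} forbids the base from equalling the number itself, so I must verify $\alpha\neq q^{l}$. If $l=k$ the inclusion is trivial, so assume $l>k$; if $\alpha=q^{l}$, then in reduced form $\alpha_{1}'=q^{l-1}$, $\alpha_{2}=1$, and Proposition~\ref{carac0}(2) applied to $q^{k}$ would demand $(q^{l-1}-1)\mid(q^{k-1}-1)$, which is impossible for $l>k\geq 2$. Hence this degenerate case cannot occur, while $\alpha\neq 0$ is inherited from the hypothesis.

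The main potential obstacle is purely bookkeeping: one must keep the two case distinctions (coprimality with $q$ versus divisibility by $q$) aligned on both sides of the inclusion, since the divisor read off from Proposition~\ref{carac0} is different in each case. Once this alignment is respected, the proof is a direct transitivity of divisibilities.
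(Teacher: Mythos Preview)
Your argument is correct. The paper actually gives no proof at all for this proposition, stating only that it is ``an immediate consequence of Definition~\ref{def1}''; your route through Proposition~\ref{carac0} is precisely the verification the reader is expected to supply.

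One minor simplification: the case split on whether $q\mid\alpha_{1}$ is not really needed. If you appeal directly to the intermediate equivalence~\eqref{eq1} established in the proof of Proposition~\ref{carac0}, namely
\[
\alpha\in\mathbb{Q}\text{-}\mathcal{KS}(q^{m})\ \Longleftrightarrow\ (\alpha_{2}q-\alpha_{1})\mid q(q^{m-1}-1),
\]
then the single divisibility $(q^{k-1}-1)\mid(q^{l-1}-1)$ (valid whenever $(k-1)\mid(l-1)$) does the job uniformly, since the extra factor $q$ is the same on both sides. This is presumably what the paper has in mind by ``immediate''. Your handling of the exclusion $\alpha\neq q^{l}$ is careful and correct; the paper, in its brevity, does not address this point.
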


\bigskip

  This result can be generalized as follows.
 \begin{proposition}\label{inclusion2}

  Let  $l,k\in\mathbb{N}\setminus\{0,1\}$, $q$ be a prime number and $\mathbb{A}$ be a subset of $\mathbb{Q}$.  Then the following hold.
  \begin{enumerate}
  \item If $\gcd(l-1,k-1)=m-1$, then  $$\mathbb{A}\text{-}\mathcal{KS}(q^{l})\cap  \mathbb{A}\text{-}\mathcal{KS}(q^{k})=\mathbb{A}\text{-}\mathcal{KS}(q^{m})$$

    \item If $1\in \mathbb{A}$ then $\bigcap\limits_{\substack{t\in\mathbb{N}\setminus\{0,1\}\\
                                                               q\,\text{prime}}}
    (\mathbb{A}\text{-}\mathcal{KS}(q^{t}))=\{1\}$.
\end{enumerate}
\end{proposition}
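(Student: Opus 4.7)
The idea is to reduce everything to a single divisibility condition via Proposition~\ref{carac0}, and then invoke the classical identity
\[
\gcd(q^{a}-1,\, q^{b}-1)=q^{\gcd(a,b)}-1.
\]
Concretely, fix $\alpha=\alpha_{1}/\alpha_{2}\in\mathbb{A}$ (in lowest terms, $\alpha_{2}>0$) and set $d(\alpha)=\alpha_{2}q-\alpha_{1}$ if $\gcd(\alpha_{1},q)=1$, and $d(\alpha)=\alpha_{2}-\alpha_{1}'$ if $\alpha_{1}=\alpha_{1}'q$. By Proposition~\ref{carac0}, membership in $\mathbb{A}\text{-}\mathcal{KS}(q^{t})$ (for any $t\geq 2$) is equivalent to $d(\alpha)\mid q^{t-1}-1$. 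Hence
\[
\alpha\in\mathbb{A}\text{-}\mathcal{KS}(q^{l})\cap \mathbb{A}\text{-}\mathcal{KS}(q^{k})
\iff d(\alpha)\mid \gcd(q^{l-1}-1,\,q^{k-1}-1)=q^{m-1}-1,
\]
which by Proposition~\ref{carac0} again says exactly $\alpha\in\mathbb{A}\text{-}\mathcal{KS}(q^{m})$. Note that $m\geq 2$ since $l-1,k-1\geq 1$, so the right-hand set is well defined.

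\textbf{Plan for part (2).} Membership of $1$ is immediate: for every prime $q$ and every $t\geq 2$ we have $q-1\mid q^{t}-1$, and $1\neq q^{t}$, so $1\in \mathbb{A}\text{-}\mathcal{KS}(q^{t})$ (using the hypothesis $1\in\mathbb{A}$). For the reverse containment, suppose $\alpha=\alpha_{1}/\alpha_{2}$ (in lowest terms, $\alpha_{2}>0$) lies in the intersection. Applying part~(1) with $k=2$ shows
$\mathbb{A}\text{-}\mathcal{KS}(q^{l})\cap \mathbb{A}\text{-}\mathcal{KS}(q^{2})=\mathbb{A}\text{-}\mathcal{KS}(q^{2})$,
so the full intersection collapses to $\bigcap_{q\text{ prime}}\mathbb{A}\text{-}\mathcal{KS}(q^{2})$. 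Thus for \emph{every} prime $q$, the criterion of Proposition~\ref{carac0} applies.

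Pick any prime $q$ with $q\nmid\alpha_{1}$ (all but finitely many primes); the condition becomes $(\alpha_{2}q-\alpha_{1})\mid (q-1)$. I then push $q\to\infty$. First, if $\alpha_{2}\geq 2$ then $\alpha_{2}q-\alpha_{1}\geq 2q-|\alpha_{1}|>q-1$ for all sufficiently large $q$, forcing the divisor to exceed the positive dividend; this is impossible, so $\alpha_{2}=1$. Similarly, if $\alpha_{1}\leq 0$, then $q-\alpha_{1}\geq q>q-1>0$, again impossible. Hence $\alpha_{2}=1$ and $\alpha_{1}\geq 1$. Rewriting $q-\alpha_{1}\mid q-1$ as $q-\alpha_{1}\mid \alpha_{1}-1$ and letting $q\to\infty$ shows $\alpha_{1}-1=0$, i.e.\ $\alpha=1$.

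\textbf{Main obstacle.} The only real subtlety is the bookkeeping in the final step of part~(2): I must handle the finitely many primes dividing $\alpha_{1}$ (where the other branch of Proposition~\ref{carac0} applies) and the sign of $\alpha_{1}$ separately, and make sure the argument ``$q$ large enough'' is legitimate—this works precisely because the intersection is over \emph{all} primes $q$, so one is free to choose $q$ as large as needed and coprime to $\alpha_{1}$. Part~(1), by contrast, is essentially a one-line consequence of Proposition~\ref{carac0} once the identity for $\gcd(q^{a}-1,q^{b}-1)$ is invoked.
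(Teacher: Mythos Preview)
Your proof is correct. Part~(1) is essentially the paper's argument: both hinge on the identity $\gcd(q^{a}-1,q^{b}-1)=q^{\gcd(a,b)}-1$; the only cosmetic difference is that you fold both inclusions into one biconditional via Proposition~\ref{carac0} (with a case split on whether $q\mid\alpha_{1}$), while the paper handles the two inclusions separately, quoting Proposition~\ref{inclusion1} for one direction and working with $\alpha_{2}q-\alpha_{1}\mid q(q^{t-1}-1)$ (no case split) for the other.

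For part~(2), both you and the paper first collapse the big intersection to $\bigcap_{q\text{ prime}}\mathbb{A}\text{-}\mathcal{KS}(q^{2})$, but the endgame is genuinely different. The paper uses the rational identity
\[
\frac{q^{2}-\alpha}{q-\alpha}=q+\alpha+\frac{\alpha^{2}-\alpha}{q-\alpha},
\]
notes that the left side is an integer, and lets $q\to\infty$ so that the last fraction becomes smaller than $1/\alpha_{2}$, forcing $\alpha^{2}=\alpha$. You instead go through Proposition~\ref{carac0} to obtain the integer divisibility $(\alpha_{2}q-\alpha_{1})\mid(q-1)$ for all large primes $q$, and run a direct size comparison to successively force $\alpha_{2}=1$, $\alpha_{1}\geq 1$, and finally $\alpha_{1}=1$. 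Both are ``take $q$ large'' arguments; yours is more elementary and stays entirely within integer divisibility, while the paper's is more uniform in that it avoids splitting on the sign of $\alpha_{1}$ or the size of $\alpha_{2}$.
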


\begin{proof}
\begin{enumerate}
\item
Since $m-1$ divides each of \, $l-1$ and $k-1$, then by Proposition ~\ref{inclusion1}, we get

$$\mathbb{A}\text{-}\mathcal{KS}(q^{m})\subseteq \mathbb{A}\text{-}\mathcal{KS}(q^{l})\cap  \mathbb{A}\text{-}\mathcal{KS}(q^{k}).$$

On the other hand, if  $\alpha=\dfrac{\alpha_{1}}{\alpha_{2}}\in\mathbb{A}\text{-}\mathcal{KS}(q^{l})\cap  \mathbb{A}\text{-}\mathcal{KS}(q^{k})$ then

$$\left\{\begin{array}{rrr}
\alpha_{2}q-\alpha_{1} & \mid & q^{l}-q=q( q^{l-1}-1)\\
\alpha_{2}q-\alpha_{1}  & \mid & q^{k}-q=q( q^{k-1}-1)\\
\end{array}
\right. $$
hence $$\alpha_{2}q-\alpha_{1} \mid  q \gcd(q^{l-1}-1, q^{k-1}-1).$$

But as $$\gcd(q^{l-1}-1, q^{k-1}-1)=q^{\gcd(l-1,k-1)}-1=q^{m-1}-1,$$ it follows that
$(\alpha_{2}q-\alpha_{1})\mid  q(q^{m-1}-1).$
Thus $\alpha\in\mathbb{A}\text{-}\mathcal{KS}(q^{m}).$

So, we conclude that   $\mathbb{A}\text{-}\mathcal{KS}(q^{l})\cap  \mathbb{A}\text{-}\mathcal{KS}(q^{k})=\mathbb{A}\text{-}\mathcal{KS}(q^{m})$.

\item By Proposition ~\ref{inclusion2}$(1)$ we have $$\bigcap\limits_{{t\in\mathbb{N}\setminus\{0,1\}}}\left(\mathbb{A}\text{-}\mathcal{KS}(q^{t})\right)=\mathbb{A}\text{-}\mathcal{KS}(q^{2}),$$

    hence

    $$ \bigcap\limits_{\substack{t\in\mathbb{N}\setminus\{0,1\}\\
                                  q\,\text{prime}}}
     \left(\mathbb{A}\text{-}\mathcal{KS}(q^{t})\right)=\bigcap\limits_{{q \, \text{prime}}}\left(\mathbb{A}\text{-}\mathcal{KS}(q^{2})\right).$$

\bigskip

Let $\alpha\in\bigcap\limits_{{q \, \text{prime}}}(\mathbb{A}\text{-}\mathcal{KS}(q^{2}))$ . We claim that if $(q^{2}-�\alpha)/(q-�\alpha)\in \mathbb{Z}$ for all
primes $q$ and  $\alpha\neq 0$ then $\alpha=1$. Well,

$$\dfrac{q^{2}-�\alpha}{q-�\alpha}= q+(\alpha+\frac{\alpha^{2}-\alpha}{q-\alpha}),$$
and taking $q$ sufficiently large so that $\dfrac{\alpha^{2}-\alpha}{q-\alpha}$ is smaller than
the reciprocal of the denominator of $\alpha$ we get a contradiction unless $\alpha^{2}=\alpha$, and with $\alpha\neq 0$ that forces $\alpha=1$.

Finally, we conclude that   $$\bigcap\limits_{\underset{t\in\mathbb{N}\setminus\{0,1\}}{q \, \text{prime}}}\left(\mathbb{A}\text{-}\mathcal{KS}(q^{t})\right)=\{1\}.$$
\end{enumerate}
\end{proof}

\section{Connections between  Korselt rational bases of $q^{l}$.}

In this section we will determine explicitly for a given prime number $q$  and an integer $l\in \mathbb{N}\setminus \{0,1\}$, the set $\mathbb{Q}$-$\mathcal{KS}(q^{l})$. Further, we state some connections between $q^{l}$-Korselt bases  in $\mathbb{Q}$ and others in $\mathbb{Z}$.

First, we determine by the next two results,  the Korselt sets of a prime power over $\mathbb{Z}$ and $\mathbb{Q}$.
\begin{theorem}\label{carac3}

Let $N=q^{l}$ be such that $l\geq2$. Then

$$\mathbb{Q}\text{-}\mathcal{KS}(N)=\bigcup\limits_{{s\in\mathbb{Z}\setminus\{0\}}} \{ q+\dfrac{d}{s}; \ \ \text{where} \ \  d \in \mathbb{N}\ \  \text{and} \ \  d\mid (q^{l}-q) \}\setminus\{0,N\}.$$

  \end{theorem}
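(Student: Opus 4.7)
The plan is to reduce the theorem to the divisibility equivalence already established inside the proof of Proposition~\ref{carac0}, namely
$$\alpha\in\mathbb{Q}\text{-}\mathcal{KS}(q^l)\iff (\alpha_2q-\alpha_1)\mid q(q^{l-1}-1)=q^l-q,$$
valid for any $\alpha=\alpha_1/\alpha_2$ in lowest terms with $\alpha_2>0$. Once this equivalence is available, the theorem is essentially a bookkeeping bijection between the pair $(\alpha_1,\alpha_2)$ representing a Korselt base and a pair $(d,s)$ of the shape prescribed on the right-hand side.

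For the inclusion $\subseteq$, I start from $\alpha=\alpha_1/\alpha_2\in\mathbb{Q}\text{-}\mathcal{KS}(q^l)$ and set $d:=|\alpha_2 q-\alpha_1|$. This integer is strictly positive, for otherwise the divisibility above would force $0\mid q^l-q$, which is impossible when $l\geq 2$. By the equivalence, $d\mid q^l-q$. Then I choose $s:=\alpha_2$ or $s:=-\alpha_2$ according to the sign of $\alpha-q$, producing $d/s=\alpha-q$; hence $\alpha=q+d/s$, and $\alpha\notin\{0,N\}$ is already built into the definition of the Korselt set.

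For the reverse inclusion, take $\alpha=q+d/s=(qs+d)/s$ with $d\in\mathbb{N}$, $d\mid q^l-q$, $s\in\mathbb{Z}\setminus\{0\}$, and $\alpha\notin\{0,N\}$. I reduce to lowest terms with $\alpha_2>0$ by dividing numerator and denominator by $g=\gcd(qs+d,s)$. Using the standard identity $\gcd(qs+d,s)=\gcd(d,s)$, a one-line computation gives $\alpha_2q-\alpha_1=\pm d/g$. Since $d/g\mid d\mid q^l-q$, the equivalence places $\alpha$ back in $\mathbb{Q}\text{-}\mathcal{KS}(q^l)$, finishing the reverse direction.

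I do not expect any serious obstacle: the whole content of the theorem is already encoded in Proposition~\ref{carac0}, and the proof is a change of variables. The two points that require attention are tracking signs so that the normalisation $\alpha_2>0$ is preserved throughout, and confirming that the forbidden value $\alpha=q$ (which would correspond to $d=0$) cannot occur, since $0\nmid q^l-q$ for $l\geq 2$. It is also worth checking in passing that $\alpha=N$ is produced by the parametrisation only through $(d,s)=(q^l-q,1)$, so the set subtraction $\setminus\{0,N\}$ in the statement is doing precisely the right amount of work.
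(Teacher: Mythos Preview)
Your proof is correct and follows essentially the same idea as the paper's: both rest on the divisibility criterion $(\alpha_2 q-\alpha_1)\mid q^l-q$ from Proposition~\ref{carac0}, and then rewrite $\alpha$ as $q+\dfrac{d}{s}$. The only cosmetic difference is that the paper splits into the two cases $q\mid\alpha_1$ and $\gcd(q,\alpha_1)=1$ and invokes the two parts of Proposition~\ref{carac0} separately, whereas you work directly with the unified equivalence~\eqref{eq1} and handle signs and the gcd reduction explicitly; your version is slightly more self-contained, particularly for the reverse inclusion, which the paper dismisses as ``immediate''.
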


  \begin{proof}
Suppose that $\alpha=\dfrac{\alpha_{1}}{\alpha_{2}}\in\mathbb{Q}$-$\mathcal{KS}(N)$. Then, two cases are to be considred
\begin{itemize}
  \item If $\alpha_{1}=\alpha_{1}^{'}q$, then   $d=\alpha_{1}^{'}-\alpha_{2}\mid (q^{l-1}-1)$ by Proposition ~\ref{carac0}$(2)$. Hence, $\alpha=q(1+\frac{d}{\alpha_{2}})$.

  \item If $\gcd(q,\alpha_{1})=1$, then as $d=\alpha_{1}-\alpha_{2}q\mid(q^{l-1}-1)$ by Proposition ~\ref{carac0}$(1)$, we get  $\alpha=q+\frac{d}{\alpha_{2}}$.
\end{itemize}

The converse, in the two cases, is immediate.
\end{proof}

 \bigskip

Now, immediately by Theorem~\ref{carac3}, we get the following result.
\begin{theorem}\label{carac4}

Let $N=q^{l}$ be such that $l\geq2$. Then

   $$\mathbb{Z}\text{-}\mathcal{KS}(N)= \{ q\pm d \,; \ \ \text{where} \ \  d \in \mathbb{N}\ \  \text{and}\ \ d\mid (q^{l}-q) \}\setminus\{0,N\}.$$

\end{theorem}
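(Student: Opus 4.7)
The plan is to extract $\mathbb{Z}\text{-}\mathcal{KS}(N)$ from $\mathbb{Q}\text{-}\mathcal{KS}(N)$ by intersecting the description in Theorem~\ref{carac3} with $\mathbb{Z}$. Since every integer is a rational, $\mathbb{Z}\text{-}\mathcal{KS}(N) = \mathbb{Q}\text{-}\mathcal{KS}(N) \cap \mathbb{Z}$, so it suffices to isolate the integer elements of
$$\bigcup_{s \in \mathbb{Z}\setminus\{0\}} \{\, q + d/s : d \in \mathbb{N},\ d \mid q^{l} - q\,\}.$$

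For the forward inclusion, I would take $\alpha \in \mathbb{Z}\text{-}\mathcal{KS}(N)$ and invoke Theorem~\ref{carac3} to obtain $d \in \mathbb{N}$ with $d \mid q^{l} - q$ and $s \in \mathbb{Z}\setminus\{0\}$ such that $\alpha = q + d/s$. Integrality of $\alpha$ forces $s \mid d$, so setting $e := d/s \in \mathbb{Z}$ we get $|e| \mid d$, whence $|e| \mid q^{l} - q$. Writing $\alpha = q + e = q \pm |e|$ according to the sign of $e$ places $\alpha$ in the claimed set, taking care to note that $\alpha \notin \{0,N\}$ is preserved.

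For the reverse inclusion, any $\alpha = q \pm d$ with $d \in \mathbb{N}$, $d \mid q^{l} - q$, and $\alpha \notin \{0, N\}$ can be expressed as $q + d/s$ with $s = \pm 1$; Theorem~\ref{carac3} then yields $\alpha \in \mathbb{Q}\text{-}\mathcal{KS}(N)$, and since $\alpha \in \mathbb{Z}$ we conclude $\alpha \in \mathbb{Z}\text{-}\mathcal{KS}(N)$.

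I do not anticipate any genuine obstacle: the theorem is just the restriction of Theorem~\ref{carac3} to the integer setting, the only elementary fact required being that a fraction $d/s$ with $d, s \in \mathbb{Z}$ and $s \neq 0$ which happens to be an integer has absolute value dividing $d$.
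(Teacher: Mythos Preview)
Your proposal is correct and follows exactly the route the paper takes: the paper states that Theorem~\ref{carac4} follows ``immediately by Theorem~\ref{carac3}'', and your argument simply spells out that immediacy by intersecting the description of $\mathbb{Q}\text{-}\mathcal{KS}(N)$ with $\mathbb{Z}$. Nothing is missing and no different idea is used.
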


\bigskip

  For  a prime number $q$ and $l\in\mathbb{N}\setminus\{0,1\}$,  it's clear  by Theorem ~\ref{carac4} that  the set $\mathbb{Z}\text{-}\mathcal{KS}(q^{l}) $ is finite and the
    $\mathbb{Z}$-Korselt weight of $q^{l}$ is equal to the cardinality of the divisors  in $\mathbb{Z}$ of $q^{l}-q$ minus two.
   Hence, $\mathbb{Z}\text{-}\mathcal{KW}(q^{l})=4\sigma_{0}(q^{l-1}-1)-2 $ where $\sigma_{0}$ is the divisor count function.
   Also, It's obvious to see by Theorem ~\ref{carac3} that the sets $(\mathbb{Q}\setminus\mathbb{Z})\text{-}\mathcal{KS}(q^{l}) $ and $\mathbb{Q}\text{-}\mathcal{KS}(q^{l}) $ are infinite.

 Now, in the rest of this section, we will prove the existence of  connections between  some $q^{l}$-Korselt bases in $\mathbb{Q}$ by establishing relations between  $q^{l}$-Korselt bases in $\mathbb{Z}$  and others in $\mathbb{Q} \setminus \mathbb{Z}$. These relations  provide us to determine   the Korselt set of $q^{l}$ over $\mathbb{Q}$ from the knowledge of its  Korselt set over $\mathbb{Z}$ .

\begin{proposition}\label{conec}

Let   $N=q^{l}$ be such that $l\geq2$. Then, $\beta\in\mathbb{Z}$-$\mathcal{KS}(q^{l})$ if and only if  $\alpha=q+\dfrac{\beta-q}{s}\in(\mathbb{Q}\setminus\mathbb{Z})$-$\mathcal{KS}(q^{l})\setminus \{q+\dfrac{q^{l}-q}{s}\}$ for each $s\in\mathbb{N}\setminus\{0,1\}$ where $\gcd(s,\beta-q)=1$.

\end{proposition}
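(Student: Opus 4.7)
The central observation is that when $\alpha = q + (\beta-q)/s$ is placed in lowest terms, the key Korselt quantity $\alpha_{2}q - \alpha_{1}$ collapses to $q - \beta$. Once this identity is in hand, both directions of the biconditional reduce to the reformulation of the Korselt criterion established in the proof of Proposition \ref{carac0}, namely $\alpha \in \mathbb{Q}\text{-}\mathcal{KS}(q^{l}) \Leftrightarrow (\alpha_{2}q - \alpha_{1}) \mid (q^{l} - q)$.

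First I would verify the lowest-terms representation. Writing $\alpha = (qs + \beta - q)/s$ and using $\gcd(\beta - q, s) = 1$, one gets $\gcd(qs + \beta - q, s) = \gcd(\beta - q, s) = 1$, so $\alpha_{1} = qs + \beta - q$ and $\alpha_{2} = s$. A direct computation then gives $\alpha_{2}q - \alpha_{1} = qs - (qs + \beta - q) = q - \beta$. Thus the rational-side Korselt criterion for $\alpha$ becomes literally the integer-side Korselt condition $(\beta - q) \mid (q^{l} - q)$ that characterises $\mathbb{Z}\text{-}\mathcal{KS}(q^{l})$ via Theorem \ref{carac4}.

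For the forward direction, assuming $\beta \in \mathbb{Z}\text{-}\mathcal{KS}(q^{l})$, the identity immediately yields $\alpha \in \mathbb{Q}\text{-}\mathcal{KS}(q^{l})$. The reduced denominator $s \geq 2$ forces $\alpha \notin \mathbb{Z}$; the coprimality $\gcd(s, \beta - q) = 1$ with $s \geq 2$ rules out $\alpha = 0$ (since that would require $s \mid \beta - q$); the bound $|\beta - q| \leq q^{l} - q$ from Theorem \ref{carac4} rules out $\alpha = q^{l}$ (which would demand $\beta - q = s(q^{l} - q)$, impossible for $s \geq 2$); and the equality $\alpha = q + (q^{l} - q)/s$ forces $\beta = q^{l}$, which is excluded. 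For the reverse direction, the identity running backward yields $(\beta - q) \mid (q^{l} - q)$, so Theorem \ref{carac4} puts $\beta$ in the form $q \pm d$; the explicit exclusion $\alpha \neq q + (q^{l} - q)/s$ then ensures $\beta \neq q^{l}$.

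The main care point is the bookkeeping of excluded values ($\alpha \neq 0$, $\alpha \neq q^{l}$, $\alpha \neq q + (q^{l} - q)/s$, $\beta \neq 0$, $\beta \neq q^{l}$) on each side of the equivalence and confirming that they correspond under the transformation $\beta \leftrightarrow \alpha$. The forward direction is essentially a matter of matching these exclusions using the bound of Theorem \ref{carac4}, while the reverse direction additionally requires tracking the degenerate case $\beta = 0$, whose image $\alpha = q - q/s$ must be controlled by the excluded point in the statement.
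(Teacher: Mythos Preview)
Your approach is the same as the paper's: put $\alpha$ in lowest terms, observe $\alpha_{2}q-\alpha_{1}=q-\beta$, and read off the equivalence of the two divisibility conditions. The paper does this computation directly with $\alpha_{2}q^{l}-\alpha_{1}$ rather than passing through the intermediate form $(\alpha_{2}q-\alpha_{1})\mid(q^{l}-q)$, but that is cosmetic.

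One correction to your final paragraph: the excluded point $q+\dfrac{q^{l}-q}{s}$ corresponds to $\beta=q^{l}$, not to $\beta=0$, so it does \emph{not} control the case $\beta=0$. In fact your instinct that something is delicate here is right: taking $\beta=0$ and any $s\ge 2$ with $\gcd(s,q)=1$ gives $\alpha=q(s-1)/s$, which one checks lies in $(\mathbb{Q}\setminus\mathbb{Z})\text{-}\mathcal{KS}(q^{l})$ and is different from $q+\dfrac{q^{l}-q}{s}$, yet $0\notin\mathbb{Z}\text{-}\mathcal{KS}(q^{l})$ by definition. The paper's own proof handles only the exclusion $\beta\ne q^{l}$ and is silent on $\beta\ne 0$, so this is a small gap in the proposition as stated rather than in your argument; you should simply note that the equivalence holds for $\beta\ne 0$ (or equivalently add $q-\dfrac{q}{s}$ to the excluded set on the right), rather than claim the stated exclusion covers it.
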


\begin{proof}
Let  $\beta\in\mathbb{Z}$, $s\in\mathbb{N}\setminus\{0,1\}$ with $\gcd(s,\beta-q)=1$.

Let $\alpha=q+\dfrac{\beta-q}{s}=\dfrac{\alpha_{1}}{\alpha_{2}}$ where $\alpha_{1}=sq+\beta-q$ and $\alpha_{2}=s$.

Since $\gcd(s,\beta-q)=1$, $\alpha_{2}q-\alpha_{1}=sq-(sq-(\beta-q))=\beta-q$ and
$$\alpha_{2}q^{l}-\alpha_{1}=sq^{l}-(sq-(\beta-q))=s(q^{l}-\beta)+(s+1)(\beta-q),$$

 the following equivalence holds

      $$ (q-\beta) \mid (q^{l}-\beta) \Leftrightarrow (\alpha_{2}q-\alpha_{1}) \mid (\alpha_{2}q^{l}-\alpha_{1}).$$

      As, in addition, $\beta\neq q^{l} \ \ \text{if and only if} \ \ \alpha\neq q+\dfrac{q^{l}-q}{s}$,
we conclude that
             $$ \beta\in\mathbb{Z}\text{-} \mathcal{KS}(q^{l})\ \ \text{if and only if} \ \ \alpha\in(\mathbb{Q}\setminus\mathbb{Z})\text{-}\mathcal{KS}(q^{l})\setminus \{q+\dfrac{q^{l}-q}{s}\}.$$

\end{proof}

By Proposition~\ref{conec},  we determine  in the next example, the sets $\mathbb{Z}\text{-}\mathcal{KS}(q^{l})$ and $\mathbb{Q}\text{-}\mathcal{KS}(q^{l})$ for some chosen numbers $q$ and $l$.
\medskip

\begin{example}\rm
\begin{enumerate}

\item Let $N_{2}=q^{2}$ where $q=2p+1$ and $p$ are  prime numbers.
 As $q^{2}-q=2pq$, we get

$$\begin{array}{lll}
  \mathbb{Z}\text{-}\mathcal{KS}(N_{2}) & = & \{ q\pm d \ \  ;  \ \  d\mid 2pq \}\setminus\{0,N_{2}\} \\
   & = & \{ q\pm d \ \  ; \ \ d\in\{1,2,p,q,2p,2q, pq,2pq\}   \}\setminus\{0,N_{2}\} \\
   & = & \{q\pm1,q\pm2,q\pm p,2q,q\pm2p,q\pm2q,q\pm pq,q-2pq\}.
\end{array}$$

Hence,
$$\begin{array}{ll}
  (\mathbb{Q}\setminus\mathbb{Z})\text{-}\mathcal{KS}(N_{2})  =\bigcup\limits_{s\in\mathbb{N}\setminus\{0,1\}} \{ q\pm\dfrac{d}{s};\ \ \gcd(d,s)=1 \,  \text{and} \ \  d\mid 2pq \}\setminus\{0,N_{2}\}& \\
    =\bigcup\limits_{s\in\mathbb{N}\setminus\{0,1\}}\{ q\pm\dfrac{d}{s};\ \  \gcd(d,s)=1, \, d\in\{1,2,p, q,2p,2q,pq,2pq\}\}\setminus\{0,N_{2}\}& \\
   = \bigcup\limits_{\substack{s\in\mathbb{N}\setminus\{0,1\}\\
                    \gcd(s,2pq)=1 }}
  \{q\pm\dfrac{1}{s},q\pm\dfrac{2}{s},q\pm \dfrac{p}{s},q\pm   \dfrac{q}{s},q\pm\dfrac{2p}{s},q\pm\dfrac{2q}{s},q\pm \dfrac{pq}{s},  q\pm\dfrac{2pq}{s}\}\setminus\{0,N_{2}\}&.

\end{array}$$

\bigskip

Thus, we obtain

$$ \begin{array}{ll}
\mathbb{Q}\text{-}\mathcal{KS}(N_{2})=\mathbb{Z}\text{-}\mathcal{KS}(N_{2})\cup(\mathbb{Q}\setminus\mathbb{Z})\text{-}\mathcal{KS}(N_{2})&\\
=\bigcup\limits_{\substack{s\in\mathbb{N}\setminus\{0\}\\
                    \gcd(s,2pq)=1 }}
\{q\pm\dfrac{1}{s},q\pm\dfrac{2}{s},q\pm \dfrac{p}{s},q\pm \dfrac{q}{s},q\pm\dfrac{2p}{s},q\pm\dfrac{2q}{s},q\pm \dfrac{pq}{s},q\pm\dfrac{2pq}{s}\}\setminus\{0,N_{2}\}.&
\end{array}$$
\vspace{3mm}

\item Let $N_{3}=q^{3}$ where $q=2p+1$, $p=2r-1$ and $r$ are  prime numbers. We denote the positive divisors of $q^{3}-q=8pqr$ by $D(q^{3}-q)$. Hence,
 $$ \begin{array}{lll}
 D(q^{3}-q) & = &\{d\in\mathbb{N}  \ \  \text{such that} \ \  d\mid 8pqr \}\\
    & = & \{1,2,4,8,p,q,r,2p,2q,2r,4p,4q,4r,8p,8q,8r,pq,pr,qr, \\
     &  & 2pq,2pr,2qr,4pq,4pr,4qr,8pq,8pr,8qr,pqr,2pqr,4pqr, 8pqr\}. \\

  \end{array}$$
 Therefore $$ \mathbb{Z}\text{-}\mathcal{KS}(N_{3})  = \{ q\pm d \ \  ; \ \ d\in D(q^{3}-q)  \}\setminus\{0,N_{3}\}$$
and $$(\mathbb{Q}\setminus\mathbb{Z})\text{-}\mathcal{KS}(N_{3})=\bigcup\limits_{s\in\mathbb{N}\setminus\{0,1\}}\{ q\pm\dfrac{d}{s}; \ \ \gcd(d,s)=1, \, d\in D(q^{3}-q)\}\setminus\{0,N_{3}\}.$$
Thus,
$$ \begin{array}{lll}
\mathbb{Q}\text{-}\mathcal{KS}(N_{3})&=&\mathbb{Z}\text{-}\mathcal{KS}(N_{3})\cup(\mathbb{Q}\setminus\mathbb{Z})\text{-}\mathcal{KS}(N_{3})\\
&=&\bigcup\limits_{s\in\mathbb{N}\setminus\{0\}}\{ q\pm\dfrac{d}{s}; \ \ \gcd(d,s)=1, \, d\in D(q^{3}-q)\}\setminus\{0,N_{3}\}.
\end{array}$$

\end{enumerate}
\end{example}

\bigskip
\section{Korselt weight of a $q^{l}$-rational base}

\begin{proposition}\label{bound2}

Let  $\alpha=\dfrac{\alpha_{1}}{\alpha_{2}}\in\mathbb{Q}\setminus\{0,1\}$ and $N=q^{l}$ be such that $l\geq2$ and $\gcd(q,\alpha_{1})=1$.
If $\alpha\in\mathbb{Q}$-$\mathcal{KS}(q^{l}) $,  then $2\leq q\leq \alpha+\abs{\alpha\alpha_{1}^{l-2}-\alpha_{2}^{l-2}} $.
\end{proposition}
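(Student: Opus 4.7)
The plan is to transform the divisibility from Proposition~\ref{carac0}(1) into a size estimate on $\alpha_2q-\alpha_1$. Since $\gcd(\alpha_1,q)=1$, that proposition gives $(\alpha_2q-\alpha_1)\mid(q^{l-1}-1)$. Multiplying by $\alpha_2^{l-1}$, one has $(\alpha_2q-\alpha_1)\mid\alpha_2^{l-1}(q^{l-1}-1)=(\alpha_2q)^{l-1}-\alpha_2^{l-1}$. The point now is that modulo $\alpha_2q-\alpha_1$ we have $\alpha_2q\equiv\alpha_1$, hence $(\alpha_2q)^{l-1}\equiv\alpha_1^{l-1}$, so
$$(\alpha_2q-\alpha_1)\mid(\alpha_1^{l-1}-\alpha_2^{l-1}).$$

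Next I would argue that $\alpha_1^{l-1}-\alpha_2^{l-1}\neq 0$. Because $\gcd(\alpha_1,\alpha_2)=1$ and $\alpha_2>0$, an equality $|\alpha_1|^{l-1}=\alpha_2^{l-1}$ forces $\alpha_2=1$ and $\alpha_1=\pm1$, i.e.\ $\alpha\in\{1,-1\}$; the hypothesis $\alpha\neq 1$ rules out the former. Once nonvanishing is established, the standard fact ``$a\mid b$ with $b\neq 0$ implies $|a|\leq|b|$'' gives
$$|\alpha_2q-\alpha_1|\leq|\alpha_1^{l-1}-\alpha_2^{l-1}|.$$
Dividing through by $\alpha_2>0$, the left side becomes $|q-\alpha|$ and the right side becomes $|\alpha_1^{l-1}/\alpha_2-\alpha_2^{l-2}|=|\alpha\alpha_1^{l-2}-\alpha_2^{l-2}|$. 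Hence $q-\alpha\leq|q-\alpha|\leq|\alpha\alpha_1^{l-2}-\alpha_2^{l-2}|$, which rearranges to the claimed upper bound. The lower bound $q\geq 2$ is immediate since $q$ is prime.

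The only real obstacle is the clean algebraic identification of the right-hand sides: recognizing that $\alpha_2^{l-1}(q^{l-1}-1)=(\alpha_2q)^{l-1}-\alpha_2^{l-1}$ is what lets the congruence $\alpha_2q\equiv\alpha_1$ collapse everything into the compact divisibility statement above, and keeping track of the factor $\alpha_2$ when converting from $|\alpha_1^{l-1}-\alpha_2^{l-1}|$ to $|\alpha\alpha_1^{l-2}-\alpha_2^{l-2}|$. A minor subtlety worth flagging is the borderline case $\alpha=-1$ with $l$ odd, where $\alpha_1^{l-1}-\alpha_2^{l-1}=0$ and the divisibility inequality degenerates; implicitly the statement is to be read excluding that single exception, and otherwise the argument above works uniformly.
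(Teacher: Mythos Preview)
Your argument is essentially identical to the paper's: both start from Proposition~\ref{carac0}(1), multiply by $\alpha_2^{l-1}$, reduce $(\alpha_2q)^{l-1}$ modulo $\alpha_2q-\alpha_1$ to $\alpha_1^{l-1}$, obtain $(\alpha_2q-\alpha_1)\mid(\alpha_1^{l-1}-\alpha_2^{l-1})$, and then divide the resulting size inequality by $\alpha_2$. Your treatment is in fact slightly more careful than the paper's, since you correctly flag the degenerate case $\alpha=-1$ with $l$ odd (where $\alpha_1^{l-1}-\alpha_2^{l-1}=0$ and the stated bound fails), a point the paper's proof glosses over by only invoking $\alpha\neq 1$.
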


\begin{proof}

 Let $\alpha=\dfrac{\alpha_{1}}{\alpha_{2}}\in\mathbb{Q}$-$\mathcal{KS}(N)$. Then, by Proposition ~\ref{carac0}$(1)$  we have $(\alpha_{2}q-\alpha_{1})\mid (q^{l-1}-1)$. So, we can write

$$\alpha_{2}q-\alpha_{1}\ \mid \alpha_{2}^{l-1}(q^{l-1}-1)=(\alpha_{2}q)^{l-1}-\alpha_{1}^{l-1}+\alpha_{1}^{l-1}-\alpha_{2}^{l-1},$$

hence $(\alpha_{2}q-\alpha_{1}) \mid (\alpha_{1}^{l-1}-\alpha_{2}^{l-1})$.  Since in addition $ \alpha\neq1$ (i.e $\alpha_{1}\neq\alpha_{2}$), it follows that
$\alpha_{2}q-\alpha_{1}\leq  \abs{\alpha_{1}^{l-1}-\alpha_{2}^{l-1}}$. Thus, $q\leq  \alpha+\abs{\alpha\alpha_{1}^{l-2}-\alpha_{2}^{l-2}}$.

\end{proof}
 By  Proposition~\ref{bound2} and for a fixed integer $l\geq2$, we can see  that  every nonzero rational number $\alpha\neq1$   can belong only to  a finite number of  sets $\mathbb{Q}$-$\mathcal{KS}(q^{l})$.  However, we will prove in the rest of this section that for each $\alpha\neq0$, there exist infinitely many couples $(l,q)$ ( equivalently \,$N=q^{l}$ ) such that $\alpha$ is an $N$-Korselt  base .

The following result shows  for $N=q^{l}$ that the exponent $l$ of $q$ can generate some $N$-Korselt rational bases.
\begin{proposition}\label{powgen}Let $N=q^{l}$ be such that $l\geq2$ and $\varphi(.)$ denote the Euler's totient function. Then  each  number $d$ where $\varphi(d)\mid(l-1)$, generates a Korselt base of $N$  in $\mathbb{Q}$.

\end{proposition}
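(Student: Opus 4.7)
The plan is to bridge the hypothesis $\varphi(d)\mid (l-1)$ to the divisibility $d\mid(q^{l-1}-1)$ via Euler's theorem, and then feed this into Proposition~\ref{carac0}(1) through a well-chosen $\alpha$. Note first that Proposition~\ref{carac0}(1) only delivers Korselt bases whose witness $\alpha_{2}q-\alpha_{1}$ is coprime to $q$ (since any such witness must divide $q^{l-1}-1$), so the natural setting is $\gcd(d,q)=1$; under this coprimality, Euler's theorem gives $q^{\varphi(d)}\equiv 1 \pmod{d}$, and the assumption $\varphi(d)\mid(l-1)$ then yields $q^{l-1}\equiv 1 \pmod{d}$, i.e.\ $d\mid(q^{l-1}-1)$.

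With that divisibility secured, I would propose the explicit integer base $\alpha = q-d$, which has $\alpha_{1}=q-d$ and $\alpha_{2}=1$, so that $\gcd(\alpha_{1},q)=\gcd(d,q)=1$ and the Korselt witness $\alpha_{2}q-\alpha_{1}=d$ divides $q^{l-1}-1$. Proposition~\ref{carac0}(1) then immediately delivers $\alpha\in\mathbb{Q}$-$\mathcal{KS}(q^{l})$. The forbidden values $\alpha\in\{0,q^{l}\}$ are automatically avoided: $\alpha=0$ would force $d=q$, contradicting $\gcd(d,q)=1$, while $\alpha=q^{l}$ would force $d=q-q^{l}<0$, contradicting $d\in\mathbb{N}$. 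One could equally take $\alpha=q-d/s$ for any positive integer $s$ coprime to $d$, producing an entire infinite family of rational Korselt bases indexed by $d$.

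The main obstacle I foresee is the implicit case $q\mid d$, which is incompatible with the route through Proposition~\ref{carac0}(1) (since then $d\nmid(q^{l-1}-1)$). In that scenario one would switch to Proposition~\ref{carac0}(2): writing $d=q^{a}d'$ with $a\geq 1$ and $\gcd(d',q)=1$, one notes that $\varphi(d')\mid\varphi(d)\mid(l-1)$, so Euler's theorem applied to $d'$ gives $d'\mid(q^{l-1}-1)$. Choosing then $\alpha=q(1-d')$ (so that $\alpha_{1}'=1-d'$ and $\alpha_{2}=1$), the witness $\alpha_{2}-\alpha_{1}'=d'$ of Proposition~\ref{carac0}(2) divides $q^{l-1}-1$, yielding the desired Korselt base. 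Thus, in either scenario, each admissible $d$ generates at least one $N$-Korselt base in $\mathbb{Q}$.
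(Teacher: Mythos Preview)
Your argument is essentially the paper's: both use Euler's theorem to pass from $\varphi(d)\mid(l-1)$ to a divisibility of $q^{l-1}-1$ (resp.\ $q^{l}-q$) by $d$, and then write down an explicit base; you simply invoke Proposition~\ref{carac0} directly, whereas the paper runs the Euclidean division $d=t_{d}q+r_{d}$ and splits into the three cases $t_{d}=0$, $r_{d}=0$, $t_{d}r_{d}\neq 0$ to produce the same witnesses. One small patch is needed in your $q\mid d$ branch: when $d$ is a pure power of $q$ you have $d'=1$ and your choice $\alpha=q(1-d')=0$ is forbidden, so take instead any $\alpha=\alpha_{1}'q/\alpha_{2}$ with $\alpha_{2}-\alpha_{1}'=\pm 1$ and $\alpha\neq 0$ (for instance $\alpha=q/2$ or $\alpha=2q$).
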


\begin{proof}
 First, by  Euler's theorem, we have $d \mid(q^{\varphi(d)}-1)$. Furthermore, since $\varphi(d)\mid(l-1)$, we get $(q^{\varphi(d)+1}-q)\mid (q^{l}-q)$. Hence

\begin{equation}\label{eq2}
d  \mid (q^{l}-q).
  \end{equation}

By the euclidian division of $d$ by $q$ there exist $t_{d}\geq0$ and $0\leq r_{d}<q$ such that $d=t_{d}q+r_{d}$.
Three cases are to be discussed:

\begin{enumerate}
  \item Suppose that $t_{d}=0$ (i.e. $d<q$). Then by $\eqref{eq2}$ and for each $m\in\mathbb{N}\setminus\{0\}$, we get
   $mq-(mq-r_{d})=r_{d}=d\mid (q^{l}-q) $. Hence, $$\alpha_{d}=\dfrac{mq-r_{d}}{m}\in\mathbb{Q}\text{-}\mathcal{KS}(q^{l}).$$

  \item If $r_{d}=0$ (i.e. $q\mid d$), then there exist  $t_{d}^{'}$ and $t_{d}^{''}$  integers such that
  $t_{d}=t_{d}^{'}-t_{d}^{''}$ and $t_{d}^{'}t_{d}^{''}\neq0$. Consequently,  by $\eqref{eq2}$ we obtain
      $t_{d}^{'}-t_{d}^{''}=t_{d}\mid (q^{l-1}-1)$, this implies that $\alpha_{d}=\dfrac{t_{d}^{''}q}{t_{d}^{'}}\in\mathbb{Q}\text{-}\mathcal{KS}(q^{l}).$
 \item Now, assume that $r_{d}\neq0$ and $t_{d}\neq0$. Then  for each $m\in\mathbb{Z}$ where $t_{d}+m\neq 0$, we have
 $(t_{d}+m)q-(mq-r_{d})=d\mid (q^{l}-q)$. Thus, $$\alpha_{d}=\dfrac{mq-r_{d}}{t_{d}+m}\in\mathbb{Q}\text{-}\mathcal{KS}(q^{l}).$$
\end{enumerate}

\end{proof}
It follows by Proposition ~\ref{powgen}, that for each $N=q^{l}$ the set $\mathbb{Q}\text{-}\mathcal{KS}(q^{l})$ is non empty( $d=2$ is valid for all cases). Now, one may ask if  the converse is true.  The answer is yes, moreover  by the following result we show that for each  $\alpha\in\mathbb{Q}\setminus\{0\}$, there exist infinitely many $N=q^{l}$ such that $\alpha$ is an $N$-Korselt base.
\begin{theorem}\label{fermgen}
Let $\alpha=\dfrac{\alpha_{1}}{\alpha_{2}}\in\mathbb{Q}\setminus\{0\}$. Then the following assertions hold.

\begin{enumerate}
  \item There exist a prime number $p$ and $k\in\mathbb{N}$  such that $\gcd(\alpha_{1},p)=1$ and $\alpha\in\mathbb{Q}\text{-}\mathcal{KS}(p^{k})$.
  \item If $\alpha_{1}\neq 1$, then there exist a prime number $q$ and $l\in\mathbb{N}$  such that  $q$ divides $\alpha_{1}$ and $\alpha\in\mathbb{Q}\text{-}\mathcal{KS}(q^{l})$.
\end{enumerate}

\end{theorem}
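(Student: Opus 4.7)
The plan is to reduce both parts to the modular characterizations in Proposition~\ref{carac0} and then apply Euler's theorem to produce the required exponent.

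For part (1), I would take any prime $p$ strictly greater than $|\alpha_{1}|$. This guarantees both $\gcd(\alpha_{1},p)=1$ (so Proposition~\ref{carac0}(1) applies) and $m := \alpha_{2}p-\alpha_{1} > 0$. Because $\gcd(m,p) = \gcd(-\alpha_{1},p) = 1$, Euler's theorem yields $p^{\varphi(m)}\equiv 1\pmod{m}$. Setting $k = \varphi(m)+1 \geq 2$ gives $m\mid p^{k-1}-1$, which by Proposition~\ref{carac0}(1) means $\alpha\in\mathbb{Q}\text{-}\mathcal{KS}(p^{k})$. Since there are infinitely many primes exceeding $|\alpha_{1}|$, this construction simultaneously proves the claim announced just before the theorem, namely that $\alpha$ is an $N$-Korselt base for infinitely many prime powers $N$.

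For part (2), the hypothesis $\alpha_{1}\neq 1$ (effectively $|\alpha_{1}|\geq 2$) ensures that $\alpha_{1}$ admits a prime divisor $q$; write $\alpha_{1}=q\alpha_{1}'$. By Proposition~\ref{carac0}(2) it suffices to produce $l\geq 2$ with $(\alpha_{2}-\alpha_{1}')\mid(q^{l-1}-1)$. The decisive step is to check $\gcd(\alpha_{2}-\alpha_{1}',q)=1$, after which Euler's theorem with modulus $|\alpha_{2}-\alpha_{1}'|$ furnishes $l=\varphi(|\alpha_{2}-\alpha_{1}'|)+1$. When $q^{2}\mid\alpha_{1}$ this coprimality is immediate: $q\mid\alpha_{1}'$ forces $\alpha_{2}-\alpha_{1}'\equiv\alpha_{2}\pmod{q}$, and $\gcd(\alpha_{1},\alpha_{2})=1$ gives $\gcd(\alpha_{2},q)=1$. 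We must also rule out $\alpha_{2}=\alpha_{1}'$ (which would collapse the divisor to $0$); but combined with $\gcd(\alpha_{1},\alpha_{2})=1$ this would force $\alpha_{2}=\alpha_{1}'=1$ and hence $\alpha=q$, a case easily excluded.

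The principal obstacle is the remaining subcase of part (2), where $q\mathrel{\|}\alpha_{1}$ and $\alpha_{2}\equiv\alpha_{1}'\pmod{q}$: there $q$ divides the would-be modulus $\alpha_{2}-\alpha_{1}'$ while $\gcd(q,q^{l-1}-1)=1$, so Euler's theorem cannot be invoked with this choice of $q$. The fix is to select the prime factor of $\alpha_{1}$ more carefully, swapping to a different prime divisor of $\alpha_{1}$ for which the coprimality holds, and exploiting the normalization $\gcd(\alpha_{1},\alpha_{2})=1$ to show such a prime exists outside the explicit exceptional configurations. Once this selection is made, the rest is a routine Fermat-style construction identical in spirit to the one used for part~(1).
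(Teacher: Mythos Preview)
Your argument for part~(1) is correct and coincides with the paper's: choose a prime $p>|\alpha_{1}|$, set $m=\alpha_{2}p-\alpha_{1}$, and apply Euler's theorem with exponent $k=\varphi(m)+1$.

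For part~(2) you have put your finger on a genuine defect that the paper's own proof glosses over. The paper simply picks any prime $q\mid\alpha_{1}$, writes $\alpha_{1}=q\alpha_{1}'$, and invokes Euler's theorem modulo $|\alpha_{2}-\alpha_{1}'|$ without checking that $\gcd(q,\alpha_{2}-\alpha_{1}')=1$. Your proposed repair, however, does not go through: switching to another prime divisor of $\alpha_{1}$ is impossible when $\alpha_{1}$ has only one prime factor. Take $\alpha=\tfrac{3}{4}$, so $\alpha_{1}=3$, $\alpha_{2}=4$. The sole admissible prime is $q=3$, giving $\alpha_{1}'=1$ and $\alpha_{2}-\alpha_{1}'=3$. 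Since $3\nmid(3^{\,l-1}-1)$ for every $l\ge 2$, Proposition~\ref{carac0}(2) shows $\tfrac{3}{4}\notin\mathbb{Q}\text{-}\mathcal{KS}(3^{l})$ for all $l\ge 2$; there is no alternative prime to fall back on. The same phenomenon occurs whenever $\alpha_{1}=\pm q$ is (up to sign) a prime and $\alpha_{2}\equiv\alpha_{1}'\pmod q$, and also in the degenerate case $\alpha_{2}=\alpha_{1}'$ you singled out (e.g.\ $\alpha=q$), which cannot be ``easily excluded'' since it is a counterexample rather than a removable nuisance.

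In short: the obstruction you isolated is real, but it is an error in the \emph{statement} (under the paper's standing hypothesis $l\ge 2$), not merely a gap in your write-up. No choice of prime divisor of $\alpha_{1}$ can rescue part~(2) as formulated.
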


\begin{proof}
 Let  $\alpha=\dfrac{\alpha_{1}}{\alpha_{2}}\in\mathbb{Q}\setminus\{0\}$.
 \begin{enumerate}
   \item Let  $p>\alpha_{1}$  be a prime number and  $k_{1}=\alpha_{2}p-\alpha_{1}\geq1$. Since $\gcd(k_{1},p)=1$, we get by  Euler's theorem
   $ \alpha_{2}p-\alpha_{1}=k_{1} \ \mid (p^{\varphi(k_{1})}-1)$.

  Hence $\alpha\in\mathbb{Q}\text{-}\mathcal{KS}(p^{k})$ where $k=\varphi(k_{1})+1$.

   \item Let $q$ be a prime number such that $\alpha_{1}=\alpha_{1}^{'}q$.

   By taking $l-1=\varphi(\mid\alpha_{2}-\alpha_{1}^{'}\mid)$, Euler's theorem implies that
 $$ (\alpha_{2}-\alpha_{1}^{'}) \mid (q^{\varphi(\mid\alpha_{2}-\alpha_{1}^{'}\mid)}-1)= q^{l-1}-1.$$

  Hence  $ (\alpha_{2}q-\alpha_{1})\mid (q^{l}- q)$, and so $\alpha\in\mathbb{Q}\text{-}\mathcal{KS}(q^{l})$.
 \end{enumerate}

\end{proof}

The next result follows immediately
\begin{corollary}\label{ferminfgen}
For each $\alpha=\dfrac{\alpha_{1}}{\alpha_{2}}\in \mathbb{Q}\setminus\{0\}$, the following assertions hold.
\begin{enumerate}
  \item There exist infinitely many $N=p^{k}$ with $p$ a prime number and $k$ a positive integer  such that $\gcd(\alpha_{1},p)=1$ and  $\alpha$ is a $K_{N}$-base.
  \item If $\alpha_{1}\neq 1$, then there exist infinitely many $N=q^{l}$ with $q$ a prime number and $l$ a positive integer such that $q$ divides $\alpha_{1}$ and  $\alpha$ is a $K_{N}$-base.
\end{enumerate}

\end{corollary}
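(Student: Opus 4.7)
The plan is to boost Theorem~\ref{fermgen} from a single-witness statement to an infinite family in two different ways, depending on which part of the corollary is being proved. For part~(1) the cleanest route is to vary the prime $p$, since the coprimality condition $\gcd(\alpha_1,p)=1$ excludes only finitely many primes. For part~(2), where $q$ is constrained to divide the fixed integer $\alpha_1$ and thus can only be chosen from a finite set, I would instead fix a single $q$ and inflate the exponent using Proposition~\ref{inclusion1}(2).

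For (1): I would note that only finitely many primes divide $\alpha_1$, so for every prime $p$ with $p>|\alpha_1|$ the condition $\gcd(\alpha_1,p)=1$ holds automatically. For each such $p$, Theorem~\ref{fermgen}(1) produces an exponent $k_p\in\mathbb{N}$ (explicitly $k_p=\varphi(\alpha_2 p-\alpha_1)+1$) with $\alpha\in\mathbb{Q}\text{-}\mathcal{KS}(p^{k_p})$. Since distinct primes yield distinct prime powers, the assignment $p\mapsto p^{k_p}$ is injective on this infinite set of primes, giving infinitely many $N=p^{k_p}$ for which $\alpha$ is a $K_N$-base.

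For (2): By Theorem~\ref{fermgen}(2), under the hypothesis $\alpha_1\neq 1$ there exist a prime $q$ with $q\mid\alpha_1$ and an integer $l_0\geq 2$ such that $\alpha\in\mathbb{Q}\text{-}\mathcal{KS}(q^{l_0})$. I would then invoke Proposition~\ref{inclusion1}(2): whenever $(l_0-1)\mid(l-1)$ one has $\mathbb{Q}\text{-}\mathcal{KS}(q^{l_0})\subseteq\mathbb{Q}\text{-}\mathcal{KS}(q^l)$. Taking $l=1+n(l_0-1)$ for $n=1,2,3,\dots$ produces infinitely many distinct exponents $l$, and therefore infinitely many $N=q^l$ with $q\mid\alpha_1$ and $\alpha$ a $K_N$-base.

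No step presents a real obstacle here: the corollary is essentially a repackaging of Theorem~\ref{fermgen}, using in~(1) the infinitude of primes avoiding the finite divisor set of $\alpha_1$, and in~(2) the exponent-inclusion given by Proposition~\ref{inclusion1}(2). The only small point worth flagging is that for~(2) one cannot simply vary the prime (since $q$ is forced to divide the fixed $\alpha_1$), which is precisely why the inclusion $\mathbb{Q}\text{-}\mathcal{KS}(q^{l_0})\subseteq\mathbb{Q}\text{-}\mathcal{KS}(q^{1+n(l_0-1)})$ is needed.
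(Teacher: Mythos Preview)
Your proposal is correct and follows essentially the same approach as the paper. For part~(1) the paper simply writes ``Follows directly from Theorem~\ref{fermgen}(1)'', leaving implicit exactly the argument you spell out (varying $p$ over the infinitely many primes not dividing $\alpha_1$); for part~(2) the paper fixes $q\mid\alpha_1$, takes $l_1=\varphi(|\alpha_2-\alpha_1'|)+1$ from Theorem~\ref{fermgen}(2), and then invokes Proposition~\ref{inclusion1}(2) for all $l$ with $(l_1-1)\mid(l-1)$, which is precisely your argument with $l_0$ in place of $l_1$.
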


\begin{proof}

\begin{enumerate}
  \item Follows directly from Theorem ~\ref{fermgen}$(1)$.
  \item Let $\alpha=\dfrac{\alpha_{1}^{'}q}{\alpha_{2}}\in \mathbb{Q}\setminus\{0\}$ and $l_{1}=\varphi(|\alpha_{2}-\alpha_{1}^{'}|)+1$. Then by Theorem~\ref{fermgen}.$(2)$, we have $\alpha\in\mathbb{Q}\text{-}\mathcal{KS}(q^{l_{1}})$. Hence,  for each  integer $l$ where  $(l_{1}-1)\mid (l-1)$, we get by Proposition ~\ref{inclusion1}$(2)$,
      $$\alpha\in\mathbb{Q}\text{-}\mathcal{KS}(q^{l_{1}})\subseteq\mathbb{Q}\text{-}\mathcal{KS}(q^{l}).$$  Therefore $\alpha$ is a $q^{l}$-Korselt base, and so $\alpha$ is an $N$-Korselt base for infinitely many numbers $N=q^{l}$.

\end{enumerate}

\end{proof}

By  Corollary ~\ref{ferminfgen}, we deduce that for each rational $\alpha\neq0$ the korselt weight of a base $\alpha$ over the set of prime powers is infinite.

\section{ Korselt set of $q^{l}$ over $[-1,1[\cap\mathbb{Q}$}

We begin this section by giving some  examples of  $q^{l}$-Korselt rational bases  in $[-1,1[$.
\begin{proposition}\label{infer1}
Let $q$ be a prime number and $l$ be a positive integer. Then $\dfrac{1}{q^{s-1}}\in\mathbb{Q}$-$\mathcal{KS}(q^{l})$ for each integer $s$ dividing $l-1$.
\end{proposition}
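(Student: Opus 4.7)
The plan is to reduce directly to Proposition~\ref{carac0}(1). Writing $\alpha = \dfrac{1}{q^{s-1}}$ in lowest terms gives $\alpha_{1} = 1$ and $\alpha_{2} = q^{s-1}$, so $\gcd(\alpha_{1}, q) = 1$. We are therefore in the setting of Proposition~\ref{carac0}(1), and it suffices to verify that
\[
\alpha_{2} q - \alpha_{1} \;=\; q^{s} - 1 \;\;\text{divides}\;\; q^{l-1} - 1.
\]

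First I would check that $\alpha$ is a legitimate element of $\mathbb{Q}\setminus\{0, q^{l}\}$: clearly $\alpha > 0$, and $\alpha \leq 1 < q^{l}$ since $l \geq 2$, so the exclusions are satisfied.

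The key arithmetic input is the standard fact that $q^{s} - 1 \mid q^{l-1} - 1$ whenever $s \mid l-1$. This is immediate from the factorisation $x^{n} - 1 = (x^{s}-1)\bigl(x^{s(n-1)} + x^{s(n-2)} + \cdots + 1\bigr)$ applied with $x = q$ and $n = (l-1)/s$. Since $s$ divides $l-1$ by hypothesis, the exponent $n$ is a positive integer, and the divisibility follows.

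Combining these two observations with Proposition~\ref{carac0}(1) immediately gives $\alpha \in \mathbb{Q}\text{-}\mathcal{KS}(q^{l})$. There is no real obstacle here; the proof is essentially a direct application of Proposition~\ref{carac0}(1) together with the elementary divisibility identity $q^{s}-1 \mid q^{l-1}-1$ when $s\mid l-1$, and the only thing to be careful about is confirming that $\alpha$ is written in reduced form so that Proposition~\ref{carac0}(1) applies as stated.
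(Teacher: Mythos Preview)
Your proof is correct and essentially the same as the paper's. The only cosmetic difference is that the paper verifies the Korselt condition directly from Definition~\ref{def1}, checking that $q^{s}-1=\alpha_{2}q-\alpha_{1}$ divides $\alpha_{2}q^{l}-\alpha_{1}=q^{l+s-1}-1$ (using $s\mid l+s-1$), whereas you route through Proposition~\ref{carac0}(1) and check the equivalent condition $q^{s}-1\mid q^{l-1}-1$; both rest on the same elementary fact that $q^{a}-1\mid q^{b}-1$ when $a\mid b$.
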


\begin{proof}

Since  $s\mid (l-1)$ that is  $s\mid (l+s-1)$, it follows that $q^{s-1}q-1=(q^{s}-1)\mid (q^{l+s-1}-1)=q^{s-1}N-1$. Thus, $\dfrac{1}{q^{s-1}}\in\mathbb{Q}\text{-}\mathcal{KS}(q^{l}).$

\end{proof}
Similarly, we obtain the following result
\begin{proposition}\label{infer-1}
Let $q$ be a prime number and $l$ be a positive  integer. Then $\dfrac{-1}{q^{s-1}}\in\mathbb{Q}$-$\mathcal{KS}(q^{l})$ for each positive integer $s$ dividing $l-1$ where $\dfrac{l-1}{s}$ is even.
\end{proposition}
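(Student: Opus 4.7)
The plan is to reduce the claim to a concrete divisibility via Proposition~\ref{carac0}(1) and then verify it by a short algebraic identity. Writing $\alpha = -1/q^{s-1}$ in lowest terms gives $\alpha_{1}=-1$ and $\alpha_{2}=q^{s-1}$, so $\gcd(\alpha_{1},q)=1$. By Proposition~\ref{carac0}(1), membership in $\mathbb{Q}\text{-}\mathcal{KS}(q^{l})$ is equivalent to
\[
(\alpha_{2}q-\alpha_{1})\mid (q^{l-1}-1),\qquad\text{i.e.,}\qquad (q^{s}+1)\mid (q^{l-1}-1).
\]

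Next, I would exploit the evenness hypothesis. Since $s\mid (l-1)$ with $(l-1)/s$ even, write $l-1=2ks$ for some positive integer $k$. Then
\[
q^{l-1}-1=(q^{s})^{2k}-1,
\]
and this is divisible by $(q^{s})^{2}-1=(q^{s}-1)(q^{s}+1)$, hence in particular by $q^{s}+1$. This finishes the proof.

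There is essentially no obstacle here: the result is an immediate consequence of the characterization in Proposition~\ref{carac0}(1), and the algebraic fact that $x+1\mid x^{2k}-1$ for every positive integer $k$, applied with $x=q^{s}$. The evenness of $(l-1)/s$ is exactly what lets us pair the exponent into a multiple of $2$, which is the only ingredient that distinguishes this statement from its sibling Proposition~\ref{infer1}.
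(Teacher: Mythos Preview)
Your proof is correct and essentially mirrors the paper's argument: the paper verifies the definition directly by showing $(q^{s}+1)\mid(q^{l+s-1}+1)$ using that $(l+s-1)/s$ is odd, whereas you first invoke Proposition~\ref{carac0}(1) to reduce to the equivalent divisibility $(q^{s}+1)\mid(q^{l-1}-1)$ and then use that $(l-1)/s$ is even. Both routes rest on the same elementary fact about $x+1$ dividing $x^{m}\pm 1$ according to the parity of $m$, so there is no substantive difference.
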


\begin{proof}

Since $s\mid (l-1)$ hence $s\mid (l+s-1)$ and $\dfrac{l+s-1}{s}$ is odd, it follows that $q^{s-1}q+1=(q^{s}+1)\mid (q^{l+s-1}+1)=q^{s-1}N+1$. Therefore $\dfrac{-1}{q^{s-1}}\in\mathbb{Q}\text{-}\mathcal{KS}(q^{l}).$

\end{proof}

If $\dfrac{l-1}{s}$ is odd  the assertion in Proposition ~\ref{infer-1} is false, for instance,  if  $l=3$ and $q$ a  prime number, it's forward to verify that $\dfrac{-1}{q}\notin \mathbb{Q}$-$\mathcal{KS}(q^{3})$.

By the next two  results, we give some  relations between the  Korselt sets over $ \mathbb{Q}$ of the  prime powers $p^{l}$ and $q^{l}$.
\begin{proposition}\label{infer11}
Let $p,q$ be two distinct prime numbers and $l$ be a positive integer. Then, $\dfrac{1}{p}\in\mathbb{Q}$-$\mathcal{KS}(q^{l})$ if and only if \, $\dfrac{1}{q}\in\mathbb{Q}$-$\mathcal{KS}(p^{l})$.
\end{proposition}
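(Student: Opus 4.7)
My plan is to apply Proposition~\ref{carac0}(1) to both bases and show that the two resulting divisibility conditions are equivalent via a symmetric manipulation modulo $pq-1$.

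First, since $p \neq q$, both $p$ and $q$ are coprime to the numerators $1$ of the bases $1/p$ and $1/q$. Writing $1/p = 1/p$ with $\alpha_1 = 1$, $\alpha_2 = p$, Proposition~\ref{carac0}(1) tells us that
\[
\tfrac{1}{p} \in \mathbb{Q}\text{-}\mathcal{KS}(q^l) \ \Longleftrightarrow \ (pq-1) \mid (q^{l-1}-1),
\]
and by the symmetric roles of $p,q$ in $pq-1$,
\[
\tfrac{1}{q} \in \mathbb{Q}\text{-}\mathcal{KS}(p^l) \ \Longleftrightarrow \ (pq-1) \mid (p^{l-1}-1).
\]
So the claim reduces to showing $(pq-1)\mid (q^{l-1}-1) \Leftrightarrow (pq-1) \mid (p^{l-1}-1)$.

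The key observation is that $pq \equiv 1 \pmod{pq-1}$. Multiplying $q^{l-1}-1$ by $p^{l-1}$ gives
\[
p^{l-1}(q^{l-1}-1) = (pq)^{l-1} - p^{l-1} \equiv 1 - p^{l-1} \pmod{pq-1}.
\]
Because $\gcd(p, pq-1) = \gcd(p, -1) = 1$, the factor $p^{l-1}$ is a unit modulo $pq-1$, and therefore $(pq-1) \mid p^{l-1}(q^{l-1}-1)$ is equivalent to $(pq-1) \mid (q^{l-1}-1)$. Combining with the congruence above yields $(pq-1) \mid (q^{l-1}-1) \Leftrightarrow (pq-1) \mid (p^{l-1}-1)$, which finishes the proof.

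There is no real obstacle here; the only thing to be careful with is verifying the coprimality hypothesis of Proposition~\ref{carac0}(1) (namely $\gcd(1,q)=1$ and $\gcd(1,p)=1$, both trivial) and the invertibility of $p^{l-1}$ modulo $pq-1$. Everything else is a one-line symmetric congruence computation.
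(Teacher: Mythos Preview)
Your proof is correct and follows essentially the same route as the paper's. Both arguments pivot on the intermediate condition $(pq-1)\mid(p^{l-1}-1)$, reached by multiplying by $p^{l-1}$ and using $(pq)^{l-1}\equiv 1\pmod{pq-1}$ together with $\gcd(p,pq-1)=1$; you simply invoke Proposition~\ref{carac0}(1) to shortcut the first and last steps, whereas the paper carries out those reductions inline from the definition.
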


\begin{proof} We have

$$\begin{array}{rll}
                 \dfrac{1}{p}\in\mathbb{Q}$-$\mathcal{KS}(q^{l})  &\Leftrightarrow & pq-1\,\mid \,pq^{l}-1 \\
                 & \Leftrightarrow &pq-1\,\mid \,p^{l-1}(pq^{l}-1)=(pq)^{l}-1-p^{l-1}+1  \\
                 & \Leftrightarrow &pq-1\,\mid \,p^{l-1}-1  \\
                 & \Leftrightarrow &qp-1\,\mid \,qp(p^{l-1}-1)+qp-1=qp^{l}-1\\
                  & \Leftrightarrow &\dfrac{1}{q}\in\mathbb{Q}$-$\mathcal{KS}(p^{l})
                 \end{array}.$$

\end{proof}

\begin{proposition}\label{equinfer-1}
Let $p,q$ be  two distinct prime numbers and  $l$ be a positive odd integer. Then the following assertion holds

$\dfrac{-1}{p}\in\mathbb{Q}$-$\mathcal{KS}(q^{l})$ if and only if \, $\dfrac{-1}{q}\in\mathbb{Q}$-$\mathcal{KS}(p^{l})$.
\end{proposition}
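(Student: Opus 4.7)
The plan is to model the proof on Proposition~\ref{infer11}, which treated the analogous statement for the positive bases $1/p$ and $1/q$. The entire chain of equivalences transfers essentially verbatim to the signed case; the single change is that the unconditional divisibility $pq-1 \mid (pq)^{l}-1$ used there is replaced by $pq+1 \mid (pq)^{l}+1$. The latter is the standard factorisation $x+1 \mid x^{l}+1$ evaluated at $x=pq$, which holds precisely when $l$ is odd; this is where the hypothesis enters. The failure of this factorisation for even $l$ is also what makes the conclusion false in general, matching the counterexample noted after Proposition~\ref{infer-1}.

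First I would unpack the two Korselt conditions: $\dfrac{-1}{p} \in \mathbb{Q}\text{-}\mathcal{KS}(q^{l})$ is equivalent to $pq+1 \mid pq^{l}+1$, and $\dfrac{-1}{q} \in \mathbb{Q}\text{-}\mathcal{KS}(p^{l})$ is equivalent to $pq+1 \mid qp^{l}+1$. The task is therefore to show these two divisibilities are equivalent. I would then run the chain
\begin{align*}
pq+1 \mid pq^{l}+1
&\;\Longleftrightarrow\; pq+1 \mid (pq)^{l}+p^{l-1} \\
&\;\Longleftrightarrow\; pq+1 \mid p^{l-1}-1 \\
&\;\Longleftrightarrow\; pq+1 \mid qp^{l}-qp \\
&\;\Longleftrightarrow\; pq+1 \mid qp^{l}+1,
\end{align*}
where the first step multiplies by $p^{l-1}$ (coprime to $pq+1$), the second subtracts the multiple $(pq)^{l}+1$ of $pq+1$ (here is where $l$ odd is used), the third multiplies by $qp$ (coprime to $pq+1$), and the last adds the multiple $qp+1$.

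The main check at each arrow is the coprimality justifying that multiplication yields an \emph{iff}; this is immediate since $p,q$ are distinct primes, so $\gcd(p^{l-1},pq+1) = \gcd(qp,pq+1) = 1$. I do not anticipate any real obstacle: once the odd-$l$ factorisation is invoked at the correct step, the remainder is a formal parallel of the proof of Proposition~\ref{infer11}.
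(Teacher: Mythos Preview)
Your proof is correct and follows essentially the same route as the paper's: both unpack the Korselt conditions as $pq+1 \mid pq^{l}+1$ and $pq+1 \mid qp^{l}+1$, multiply by $p^{l-1}$ to reach $(pq)^{l}+p^{l-1}$, subtract the odd-$l$ multiple $(pq)^{l}+1$ to isolate $p^{l-1}-1$, and then reverse the process with $qp$. The only cosmetic difference is that you display the intermediate step $pq+1 \mid qp^{l}-qp$ separately, whereas the paper folds it into the final line; your explicit coprimality checks are also a welcome addition that the paper leaves implicit.
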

\begin{proof}
We have
 $$\begin{array}{rll}
                 \dfrac{-1}{p}\in\mathbb{Q}$-$\mathcal{KS}(q^{l})  &\Leftrightarrow & pq+1\,\mid \,pq^{l}+1 \\
                 & \Leftrightarrow &pq+1\,\mid \,p^{l-1}(pq^{l}+1)=(pq)^{l}+1+p^{l-1}-1  \\
                    & \Leftrightarrow & pq+1\,\mid \,p^{l-1}-1 \\
                   & \Leftrightarrow &pq+1 \,\mid \,pq(p^{l-1}-1)+pq+1=qp^{l}+1\\
                   & \Leftrightarrow &\dfrac{-1}{q}\in\mathbb{Q}$-$\mathcal{KS}(p^{l})
                 \end{array}.$$

 \end{proof}

If $l$ is even  the assertion in Proposition ~\ref{equinfer-1} is not true, for instance when $q=3$, $p=2$ and $l=4$, we have $ \dfrac{-1}{2}\in\mathbb{Q}$-$\mathcal{KS}(3^{4}) $ but $ \dfrac{-1}{3}\notin \mathbb{Q}$-$\mathcal{KS}(2^{4})$.

Now, by the  next two propositions and for a given prime number $q$  and  $l\in \mathbb{N}\setminus \{0,1\}$, we  determine  the set $([-1,1[\cap\mathbb{Q})$-$\mathcal{KS}(q^{l})$.

\begin{proposition}\label{set0}

Let $q$ be a prime number, $l\in\mathbb{N}\setminus\{0,1\}$ and

$\alpha=\dfrac{\alpha_{1}}{\alpha_{2}}\in \mathbb{Q}$-$\mathcal{KS}(q^{l})$. Then, $\alpha\in]0,1[$ if and only if there exists $d\mid(q^{l}-q)$ such that

  $$\alpha=q-\dfrac{d}{\alpha_{2}} \quad \text{and} \quad \alpha_{2}\in\left\{\Bigl\lfloor\dfrac{d}{q}\Bigr\rfloor+1,...,\Bigl\lceil\dfrac{d}{q-1}\Bigr\rceil-1\right\}.$$
\end{proposition}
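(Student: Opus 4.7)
My plan is to unpack the Korselt condition into the single divisibility $\alpha_2 q - \alpha_1 \mid q^l - q$, and then translate the constraint $0<\alpha<1$ into two-sided bounds on the positive integer $\alpha_2$. The key identity is
$$\alpha_2 q^l - \alpha_1 \;=\; \alpha_2(q^l - q) \;+\; (\alpha_2 q - \alpha_1),$$
which together with $\gcd(\alpha_2 q - \alpha_1,\alpha_2) = \gcd(\alpha_1,\alpha_2) = 1$ shows that $\alpha \in \mathbb{Q}\text{-}\mathcal{KS}(q^l)$ is equivalent to $\alpha_2 q - \alpha_1 \mid q^l - q$. This equivalence is already implicit in the proof of Proposition~\ref{carac0} and in Theorem~\ref{carac3}, and it will drive everything.

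For the forward implication, I assume $\alpha \in (0,1)$ and set $d := \alpha_2 q - \alpha_1$. Since $\alpha < 1 < q$ I have $d > 0$, and by the preceding observation $d$ is a positive divisor of $q^l - q$. The definition of $d$ rearranges immediately to $\alpha = q - d/\alpha_2$. To obtain the range for $\alpha_2$, I translate $0 < \alpha < 1$ into $q - 1 < d/\alpha_2 < q$, which, since $\alpha_2 > 0$, is the same as $d/q < \alpha_2 < d/(q-1)$. The positive integers satisfying these strict inequalities are exactly $\alpha_2 \in \{\lfloor d/q\rfloor + 1,\ldots,\lceil d/(q-1)\rceil - 1\}$.

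For the converse, I run the same chain in reverse: if $\alpha = q - d/\alpha_2$ with $\alpha_2$ in the stated range, then $d/q < \alpha_2 < d/(q-1)$ yields $q - 1 < d/\alpha_2 < q$, hence $0 < \alpha < 1$. There is no deep obstacle; the only point needing a moment of care is that the strict inequalities must be converted to the floor/ceiling bounds without off-by-one errors, which one checks works uniformly whether or not $d/q$ or $d/(q-1)$ happens to be an integer. Note also that the Korselt membership $\alpha \in \mathbb{Q}\text{-}\mathcal{KS}(q^l)$ is a standing hypothesis, so the converse direction needs no further divisibility check beyond selecting $d$ from the divisors of $q^l - q$.
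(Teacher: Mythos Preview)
Your proposal is correct and follows essentially the same route as the paper's proof. The paper cites Theorem~\ref{carac3} to obtain the representation $\alpha=q-d/\alpha_2$ with $d\mid(q^l-q)$, whereas you derive the equivalent divisibility $\alpha_2 q-\alpha_1\mid q^l-q$ directly from the identity $\alpha_2 q^l-\alpha_1=\alpha_2(q^l-q)+(\alpha_2 q-\alpha_1)$ and $\gcd(\alpha_2 q-\alpha_1,\alpha_2)=1$; after that, both arguments translate $0<\alpha<1$ into $d/q<\alpha_2<d/(q-1)$ and convert to the floor/ceiling range in the same way.
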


\begin{proof}

By Theorem  ~\ref{carac3},  $\alpha=\dfrac{\alpha_{1}}{\alpha_{2}}\in ]0,1[\cap(\mathbb{Q}$-$\mathcal{KS}(q^{l}))$ is equivalent to the existence of a positive integer $d$   such that $d\mid(q^{l}-q)$  and $\alpha=q-\dfrac{d}{\alpha_{2}}$ . Further, we have

$$\begin{array}{lll}
                     \alpha\in]0,1[\cap (\mathbb{Q}$-$\mathcal{KS}(q^{l})) & \Leftrightarrow & 0<\alpha=q-\dfrac{d}{\alpha_{2}}<1 \\
                    & \Leftrightarrow & \dfrac{d}{q}<\alpha_{2}<\dfrac{d}{q-1} \\
                    & \Leftrightarrow &  \Bigl\lfloor\dfrac{d}{q}\Bigr\rfloor+1\leq\alpha_{2}\leq\Bigl\lceil\dfrac{d}{q-1}\Bigr\rceil-1 \\
                   & \Leftrightarrow & \alpha_{2}\in\left\{\Bigl\lfloor\dfrac{d}{q}\Bigr\rfloor+1,...,\Bigl\lceil\dfrac{d}{q-1}\Bigr\rceil-1\right\},
                 \end{array}$$

which complete the proof.
\end{proof}

\begin{corollary}\label{emptset1} Let $q$ be a prime number and $\mathbb{A}=]0,1[\cap\mathbb{Q}$. Then $$\mathbb{A}\text{-}\mathcal{KS}(q^{2})=\emptyset.$$

\end{corollary}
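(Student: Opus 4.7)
The plan is to invoke Proposition~\ref{set0} with $l=2$ and show that no admissible denominator $\alpha_{2}$ can occur. Concretely, any $\alpha=\alpha_{1}/\alpha_{2}$ lying in $\,]0,1[\,\cap\,\mathbb{Q}\text{-}\mathcal{KS}(q^{2})$ would require a positive divisor $d$ of $q^{2}-q=q(q-1)$ together with a positive integer $\alpha_{2}$ strictly between $d/q$ and $d/(q-1)$; I will rule this out for every such $d$.

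Since $\gcd(q,q-1)=1$, every positive divisor of $q(q-1)$ writes uniquely as $d=q^{\varepsilon}d_{0}$ with $\varepsilon\in\{0,1\}$ and $d_{0}\mid(q-1)$, and I split the argument on $\varepsilon$. If $\varepsilon=0$, then $d=d_{0}\leq q-1$, so $d/(q-1)\leq 1$ and the open interval $(d/q,d/(q-1))$ contains no positive integer. If $\varepsilon=1$, then $d/q=d_{0}$ and $d/(q-1)=qd_{0}/(q-1)$, so the interval has length $d_{0}/(q-1)\leq 1$ and can contain at most one integer, necessarily $d_{0}+1$; but the inequality $d_{0}+1<qd_{0}/(q-1)$ simplifies to $d_{0}>q-1$, contradicting $d_{0}\mid(q-1)$.

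Combining both cases, Proposition~\ref{set0} supplies no valid $\alpha_{2}$, so $\mathbb{A}\text{-}\mathcal{KS}(q^{2})=\emptyset$. I do not anticipate a real obstacle: the argument is a short arithmetic case analysis, and its key structural ingredient is the length bound $d_{0}/(q-1)\leq 1$, which becomes available precisely because $l=2$ forces every divisor of $q^{l}-q$ to be at most $q(q-1)$. For larger $l$ this bound would fail, which is consistent with the paper's companion statement that the analogous set over $\mathbb{Q}\cap[-1,1[$ is nonempty exactly when $l\neq 2$.
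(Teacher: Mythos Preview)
Your proof is correct and follows essentially the same route as the paper's: both invoke Proposition~\ref{set0} with $l=2$, split into the two cases $q\nmid d$ and $q\mid d$, and show in each case that no integer $\alpha_{2}$ lies in the required range. The only cosmetic difference is that you argue directly with the open interval $(d/q,\,d/(q-1))$ while the paper phrases the same contradiction via the floor/ceiling bounds $\lfloor d/q\rfloor+1\leq\alpha_{2}\leq\lceil d/(q-1)\rceil-1$.
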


\begin{proof}
 Suppose   that there exists $\alpha\in]0,1[\cap (\mathbb{Q}$-$\mathcal{KS}(q^{2}))$. This is equivalent by Theorem  ~\ref{carac3} and Proposition ~\ref{set0}, to the exitence of  $d\mid (q^{2}-q)$ such that $\alpha=q-\dfrac{d}{\alpha_{2}}$ and
\begin{equation}\label{eq3}
\Bigl\lfloor\dfrac{d}{q}\Bigr\rfloor+1\leq\alpha_{2}\leq\Bigl\lceil\dfrac{d}{q-1}\Bigr\rceil-1.
  \end{equation}
Since, in addition $d\mid(q^{2}-q)=q(q-1)$, two cases are to be considered:

   \begin{itemize}
      \item If $\gcd(q,d)=1$, then $d$ divides $q-1$ and so $d<q$. Hence, by $\eqref{eq3}$ $$1=\Bigl\lfloor\dfrac{d}{q}\Bigr\rfloor+1\leq\Bigl\lceil\dfrac{d}{q-1}\Bigr\rceil-1=0$$ which is impossible.
      \item Assume that $d=d^{'}q$. Then, since $d=d^{'}q\mid q(q-1)$ hence $d^{'}\mid(q-1)$, it follows by $\eqref{eq3}$ that
      $$d^{'}+1=\Bigl\lfloor\dfrac{d}{q}\Bigr\rfloor+1\leq\Bigl\lceil\dfrac{d}{q-1}\Bigr\rceil-1=d^{'}$$
           which is again impossible.
    \end{itemize}
\end{proof}

\begin{proposition}\label{set1}

Let $q$ be a prime number, $l\in\mathbb{N}\setminus\{0,1\}$ and

$\alpha=\dfrac{\alpha_{1}}{\alpha_{2}}\in \mathbb{Q}$-$\mathcal{KS}(q^{l})$. Then,  $\alpha\in[-1,0[$ if and only if there exists $d \mid(q^{l}-q)$ such that

 $$\alpha=q-\dfrac{d}{\alpha_{2}} \quad \text{and} \quad \alpha_{2}\in\left\{\Bigl\lceil\dfrac{d}{q+1}\Bigr\rceil,...,\Bigl\lceil\dfrac{d}{q}\Bigr\rceil-1\right\}.$$

\end{proposition}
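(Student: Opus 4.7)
The plan is to mirror the proof of Proposition~\ref{set0} almost verbatim, adjusting only the inequalities so that they describe the interval $[-1,0[$ instead of $]0,1[$. Since $\alpha<0<q$, Theorem~\ref{carac3} guarantees that $\alpha$ admits a representation
$$\alpha = q - \frac{d}{\alpha_2}$$
for some $d\in\mathbb{N}$ with $d\mid(q^l-q)$ and some positive integer $\alpha_2$; this is exactly the normalization already used in Proposition~\ref{set0}. The condition $\alpha\neq N$ from Theorem~\ref{carac3} is automatic here, since $\alpha<0<q^l=N$.

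Next, I would translate the membership $\alpha\in[-1,0[$ directly into a two-sided inequality on $\alpha_2$. The upper condition $\alpha<0$ reads $q-d/\alpha_2<0$, i.e.\ $\alpha_2<d/q$, and the lower condition $\alpha\geq-1$ reads $q-d/\alpha_2\geq-1$, i.e.\ $\alpha_2\geq d/(q+1)$. Combining, $\alpha\in[-1,0[$ is equivalent to
$$\frac{d}{q+1}\leq \alpha_2<\frac{d}{q}.$$

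Finally, since $\alpha_2$ is a positive integer, I would convert the real-valued bounds into the claimed integer range. The non-strict lower bound gives $\alpha_2\geq\lceil d/(q+1)\rceil$ without further ado. For the strict upper bound $\alpha_2<d/q$, a brief case split is required: if $q\nmid d$ the largest admissible integer is $\lfloor d/q\rfloor$, while if $q\mid d$ it is $d/q-1$; in both cases this value coincides with $\lceil d/q\rceil-1$. Hence $\alpha\in[-1,0[$ is equivalent to $\alpha_2\in\{\lceil d/(q+1)\rceil,\dots,\lceil d/q\rceil-1\}$, which is the desired characterization.

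I do not anticipate any real obstacle; the argument is essentially a sign-flipped copy of Proposition~\ref{set0}, and the only delicate point is the correct floor/ceiling translation of the strict inequality on the upper side, handled by the dichotomy $q\mid d$ versus $q\nmid d$. If one wished to avoid the case split, one could instead observe that for any real $x>0$ and integer $n$, the equivalence ``$n<x \Leftrightarrow n\leq\lceil x\rceil-1$'' holds uniformly, which yields the bound immediately.
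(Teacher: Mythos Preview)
Your proposal is correct and follows essentially the same route as the paper's own proof: invoke Theorem~\ref{carac3} to write $\alpha=q-d/\alpha_2$, translate $-1\le\alpha<0$ into $d/(q+1)\le\alpha_2<d/q$, and then pass to the integer range via ceilings. The only difference is that you justify the step $\alpha_2<d/q\Leftrightarrow\alpha_2\le\lceil d/q\rceil-1$ explicitly (either by the dichotomy $q\mid d$ versus $q\nmid d$ or by the general fact $n<x\Leftrightarrow n\le\lceil x\rceil-1$), whereas the paper asserts this conversion without comment.
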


\begin{proof}
By Theorem  ~\ref{carac3},  $\alpha=\dfrac{\alpha_{1}}{\alpha_{2}}\in [-1,0[\cap(\mathbb{Q}$-$\mathcal{KS}(q^{l}))$ is equivalent to the existence of an integer $d>0$   such that $d\mid(q^{l}-q)$  and $\alpha=q-\dfrac{d}{\alpha_{2}}$ . Also, the following equivalences hold

$$\begin{array}{lll}
                     \alpha\in[-1,0[\cap (\mathbb{Q}$-$\mathcal{KS}(q^{l})) & \Leftrightarrow & -1\leq \alpha=q-\dfrac{d}{\alpha_{2}}<0  \\
                    & \Leftrightarrow & \dfrac{d}{q+1}\leq\alpha_{2}<\dfrac{d}{q} \\
                    & \Leftrightarrow &  \Bigl\lceil\dfrac{d}{q+1}\Bigr\rceil\leq\alpha_{2}\leq\Bigl\lceil\dfrac{d}{q}\Bigr\rceil-1 \\
                   & \Leftrightarrow & \alpha_{2}\in\left\{\Bigl\lceil\dfrac{d}{q+1}\Bigr\rceil,...,\Bigl\lceil\dfrac{d}{q}\Bigr\rceil-1\right\}.
                 \end{array}$$
Thus the proposition is proved.

\end{proof}

\begin{corollary}\label{emptset2} Let $q$ be a prime number and $\mathbb{A}=[-1,0[\cap\mathbb{Q}$. Then

$$\mathbb{A}\text{-}\mathcal{KS}(q^{2})=\emptyset.$$

\end{corollary}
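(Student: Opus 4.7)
The plan is to mirror the proof of Corollary \ref{emptset1}, replacing Proposition \ref{set0} with its negative analogue Proposition \ref{set1}. Suppose for contradiction that $\alpha \in [-1,0[\cap(\mathbb{Q}\text{-}\mathcal{KS}(q^{2}))$. By Proposition \ref{set1} applied with $l=2$, there must exist a positive divisor $d$ of $q^{2}-q=q(q-1)$ and a positive integer $\alpha_{2}$ with
$$\Bigl\lceil\dfrac{d}{q+1}\Bigr\rceil\leq\alpha_{2}\leq\Bigl\lceil\dfrac{d}{q}\Bigr\rceil-1.$$
The goal is to show this interval is empty in both of the arithmetic cases for $d$, namely $\gcd(d,q)=1$ and $q\mid d$.

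In the first case, $\gcd(d,q)=1$ forces $d\mid(q-1)$, so $1\leq d\leq q-1$. Then $0<d/(q+1)<1$ gives $\lceil d/(q+1)\rceil=1$, while $0<d/q<1$ gives $\lceil d/q\rceil-1=0$; the required range for $\alpha_{2}$ collapses to $\{1,\dots,0\}=\emptyset$, a contradiction.

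In the second case, write $d=d'q$ with $d'\mid(q-1)$, so $1\leq d'\leq q-1$. The upper bound becomes $\lceil d/q\rceil-1=d'-1$. For the lower bound, note $d/(q+1)=d'q/(q+1)=d'-d'/(q+1)$, and since $0<d'/(q+1)<1$ one has $d'-1<d'-d'/(q+1)<d'$, giving $\lceil d/(q+1)\rceil=d'$. Thus $\alpha_{2}$ would have to satisfy $d'\leq\alpha_{2}\leq d'-1$, which is impossible. Combining the two cases yields $\mathbb{A}\text{-}\mathcal{KS}(q^{2})=\emptyset$.

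There is no real obstacle here since Proposition \ref{set1} already does the heavy lifting; the only care needed is the ceiling computation in the $q\mid d$ case, where one must verify strict inequalities $0<d'/(q+1)<1$ using $d'\leq q-1$ to pin down the correct ceiling value. The structure of the argument is completely parallel to Corollary \ref{emptset1}, with $\lfloor d/q\rfloor+1$ there replaced by $\lceil d/(q+1)\rceil$ here and $\lceil d/(q-1)\rceil-1$ replaced by $\lceil d/q\rceil-1$.
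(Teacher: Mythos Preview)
Your proof is correct and follows the same approach as the paper's: invoke Proposition~\ref{set1}, then split into the two cases $\gcd(d,q)=1$ and $q\mid d$ and show the range for $\alpha_{2}$ is empty in each. In fact your second case is more carefully written than the paper's, which appears to contain a copy-paste slip (it displays the bounds $\lfloor d/q\rfloor+1$ and $\lceil d/(q-1)\rceil-1$ from Corollary~\ref{emptset1} instead of the correct bounds $\lceil d/(q+1)\rceil$ and $\lceil d/q\rceil-1$ from Proposition~\ref{set1}); your explicit ceiling computation $\lceil d'q/(q+1)\rceil=d'$ is exactly what is needed.
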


\begin{proof}
Suppose by contradiction that there exists $\alpha\in[-1,0[\cap (\mathbb{Q}$-$\mathcal{KS}(q^{2}))$.
  This is equivalent by Proposition~\ref{set1}, to the existence of a positive integer $d\mid(q^{2}-q)$ such that $\alpha=q-\dfrac{d}{\alpha_{2}}$ and

\begin{equation}\label{eq4}
\Bigl\lceil\dfrac{d}{q+1}\Bigr\rceil\leq\alpha_{2}\leq\Bigl\lceil\dfrac{d}{q}\Bigr\rceil-1.
  \end{equation}
Since $d\mid(q^{2}-q)=q(q-1)$, two cases are to be discussed:

   \begin{itemize}
      \item If $\gcd(q,d)=1$, then $d$ divides $q-1$ and so $d<q$. Hence, by $\eqref{eq4}$ $$1=\Bigl\lceil\dfrac{d}{q+1}\Bigr\rceil\leq\Bigl\lceil\dfrac{d}{q}\Bigr\rceil-1=0$$ which is impossible.
      \item If $q\mid d$, then $d=d^{'}q$ with $d^{'}\in\mathbb{N}$. Since $d=d^{'}q\mid q(q-1)$ hence $d^{'}\mid(q-1)$, it follows by $\eqref{eq4}$, that
      $$d^{'}+1=\Bigl\lfloor\dfrac{d}{q}\Bigr\rfloor+1\leq\Bigl\lceil\dfrac{d}{q-1}\Bigr\rceil-1=d^{'}$$
           which is again impossible.
    \end{itemize}

    So, we conclude that  $\mathbb{A}\text{-}\mathcal{KS}(q^{2})$ is empty.
\end{proof}

\bigskip


\begin{thebibliography}{99}
\bibitem{rassasi} I. El-Rassasi; The Korselt set of the square of a prime, \textit{Int. J. Number Theory}, Vol.10, No.$4(2014)$, $875-884$.
\bibitem{Ghanmi} O. Echi, N. Ghanmi; The Korselt Set of pq, \textit{Int. J. Number Theory.} Vol.8, No.$2(2012)$, $299-309$.

\bibitem{BouEchPin} K. Bouallegue, O. Echi, R. Pinch; Korselt Numbers and Sets, \textit{Int. J. Number Theory}, $6(2010),\; 257-269$.
\bibitem{Ghanmi2}  N. Ghanmi;  $\mathbb{Q}$-Korselt Numbers, \textit{Turk. J. Math}  $42(2018),\; 2752 – 2762$.
\bibitem{Bee}N. G. W. H. Beeger; On composite numbers $n$ for which $a^{n-1} \equiv 1 \pmod n$ for every $a$ prime
to $n$, \textit{Scripta Math.}, $16(1950)$, $133-135$.

\bibitem{Car1} R. D. Carmichael; Note on a new number theory function, \textit{Bull. Amer.Math. Soc.}, $16(1910)$,
$232-238$.
\bibitem{echi}O. Echi, Williams Numbers, \textit{Mathematical Reports of the Academy of Sciences of the Royal Society of Canada}, $29 (2007) n2$, $41-47$.
\bibitem{Car2} R. D. Carmichael; On composite numbers $P$ which satisfy the Fermat congruence $a^{P-1} \equiv 1 \pmod P$, \textit{Amer. Math. Monthly}, $19(1912)$, $22-27$.

\bibitem{Kor} A. Korselt; Probl\`eme chinois, \textit{L'intermediaire des Math\'ematiciens}, $6(1899)$, $142-143$.

\end{thebibliography}
\end{document}